\definecolor{darkorange}{rgb}{1.0, 0.55, 0.0}
\definecolor{Royalblue}{rgb}{0.254,0.41,0.88}
\definecolor{royalblue}{rgb}{0.254,0.41,0.88}
\definecolor{mygreen}{rgb}{0.0, 0.7, 0.0}
\newcommand{\refone}[2][black]{\textcolor{#1}{#2}}
\newcommand{\reftwo}[2][black]{\textcolor{#1}{#2}}
\newcommand{\dadd}[1]{{\leavevmode\color{black}{#1}}}
\definecolor{darkorange}{rgb}{1.0, 0.55, 0.0}
\definecolor{royalblue}{rgb}{0.254,0.41,0.88}
\newcommand{\ie}[0]{{i.e.\@}\xspace}
\DeclareMathOperator{\diag}{diag}
\newtheorem{theorem}{Theorem}[section]
\newtheorem{remark}{Remark}[section]
\newcommand{\Tr}{\ensuremath{^{\mr{\top}}}}
\newcommand{\mr}[1]{\ensuremath{\mathrm{#1}}}
\newcommand{\fnc}[1]{\ensuremath{\mathcal{#1}}}
\newcommand{\bfnc}[1]{\ensuremath{\bm{\mathcal{#1}}}}
\newcommand{\mat}[1]{\ensuremath{\mathsf{#1}}}
\newcommand{\xm}[0]{\ensuremath{x_{m}}}
\newcommand{\xone}[0]{x_{1}}
\newcommand{\xtwo}[0]{x_{2}}
\newcommand{\xthree}[0]{x_{3}}
\newcommand{\Nk}[0]{\ensuremath{N_{\kappa}}}
\newcommand{\Mk}[0]{\ensuremath{\mat{P}}_{\kappa}}
\newcommand{\Q}[0]{\ensuremath{\bm{\fnc{Q}}}}
\newcommand{\W}[0]{\ensuremath{\bm{\fnc{W}}}}
\newcommand{\Fxm}[0]{\ensuremath{\bm{\fnc{F}}_{\xm}^{(I)}}}
\newcommand{\Fxmv}[0]{\ensuremath{\bm{\fnc{F}}_{\xm}^{(V)}}}
\newcommand{\GB}[0]{\ensuremath{\bm{\fnc{G}}^{(B)}}}
\newcommand{\Gzero}[0]{\ensuremath{\bm{\fnc{G}}^{(0)}}}
\newcommand{\Uone}[0]{\ensuremath{\fnc{U}_{1}}}
\newcommand{\Utwo}[0]{\ensuremath{\fnc{U}_{2}}}
\newcommand{\Uthree}[0]{\ensuremath{\fnc{U}_{3}}}
\newcommand{\E}[0]{\ensuremath{\fnc{E}}}
\newcommand{\Cij}[2]{\ensuremath{\mat{C}_{#1,#2}}}
\newcommand{\matJk}[0]{\ensuremath{\mat{J}}_{\kappa}}
\newcommand{\onek}[0]{\ensuremath{\bm{1}}_{\Nk}}
\newcommand{\uk}[0]{\ensuremath{\bm{u}_{\kappa}}}
\newcommand{\Dxone}[0]{\ensuremath{{\mat{D}}_{x_1}}}
\newcommand{\Dxthree}[0]{\ensuremath{{\mat{D}}_{x_3}}}
\newcommand{\Dxgen}[1]{\ensuremath{{\mat{D}}_{x_{#1}}}}
\newcommand{\Poned}[0]{\ensuremath{\mat{P}_{N}}}
\newcommand{\Pmat}[0]{\ensuremath{\mat{P}}}
\newcommand{\Dmat}[0]{\ensuremath{\mat{D}}}
\newcommand{\Qmat}[0]{\ensuremath{\mat{Q}}}
\newcommand{\Bmat}[0]{\ensuremath{\mat{B}}}
\newcommand{\Doned}[0]{\ensuremath{\mat{D}_{N}}}
\newcommand{\Qoned}[0]{\ensuremath{\mat{Q}_{N}}}
\newcommand{\Boned}[0]{\ensuremath{\mat{B}_{N}}}
\newcommand{\Pxgen}[1]{\ensuremath{{\mat{P}}_{x_{#1}}}}
\newcommand{\Pxgentwod}[2]{\ensuremath{{\mat{P}}_{x_{#1},{x_#2}}}}
\newcommand{\Pxgenthreed}[3]{\ensuremath{{\mat{P}}_{x_{#1},{x_#2},{x_#3}}}}
\newcommand{\Pxgeninv}[1]{\ensuremath{{\mat{P}}^{-1}_{x_{#1}}}}
\newcommand{\Qxone}[0]{\ensuremath{{\mat{Q}}_{x_1}}}
\newcommand{\Bxone}[0]{\ensuremath{{\mat{B}}_{x_1}}}
\newcommand{\Bxtwo}[0]{\ensuremath{{\mat{B}}_{x_2}}}
\newcommand{\Qxthree}[0]{\ensuremath{{\mat{Q}}_{x_3}}}
\newcommand{\Bxthree}[0]{\ensuremath{{\mat{B}}_{x_3}}}
\newcommand{\q}[0]{\ensuremath{\bm{q}}}
\newcommand{\nxm}[0]{\ensuremath{n_{\xm}}}
\newcommand{\w}[0]{\ensuremath{\bm{w}}}
\newcommand{\Pxgentwodhat}[2]{\ensuremath{\widehat{\mat{P}}_{x_{#1},{x_#2}}}}
\newcommand{\Bxgenhat}[1]{\ensuremath{\widehat{\mat{B}}_{x_{#1}}}}
\newcommand{\XiN}[1]{\ensuremath{\bm{\Theta}_{#1}}}
\newcommand{\XiNnobold}[1]{\ensuremath{{\Theta}_{#1}}}
\newcommand{\Pmatvol}[0]{\ensuremath{\mat{P}}}
\newcommand{\IStoFxm}[1]{\ensuremath{\mat{I}_{S\text{to}F}^{x_{#1}}}}
\newcommand{\fxmIlin}[1]{\ensuremath{\bm{f}_{x_{#1}}^{(I)}}}
\newcommand{\fxmVlin}[1]{\ensuremath{\bm{f}_{x_{#1}}^{(V)}}}
\newcommand{\fxmIlinbar}[1]{\ensuremath{\bar{\bm{f}}_{x_{#1}}^{(I)}}}
\newcommand{\fxmIlinbarsc}[1]{\ensuremath{\bar{\bm{f}}_{x_{#1},sc}^{(I)}}}
\newcommand{\fxmIlinbarssr}[1]{\ensuremath{\bar{\bm{f}}_{x_{#1},ssr}^{(I)}}}
\begin{document}

\begin{frontmatter}

\title{Conservative and entropy stable solid wall
boundary conditions for the compressible Navier--Stokes equations: Adiabatic wall and
  heat entropy transfer}

\author[KAUST]{Lisandro Dalcin\fnref{fn1}}
\ead{dalcinl@gmail.com}
\author[KAUST]{Diego Rojas\fnref{fn2}}
\ead{diego.rojasblanco@kaust.edu.sa}
\author[KAUST]{Stefano Zampini\fnref{fn1}}
\ead{stefano.zampini@kaust.edu.sa}
\author[nia,nasa]{David C.~Del Rey Fern\'andez\fnref{fn3}}
\ead{dcdelrey@gmail.com}
\author[nasa]{Mark H. Carpenter\fnref{fn4}}
\ead{mark.h.carpenter@nasa.gov}
\author[KAUST]{Matteo Parsani\fnref{fn5}\corref{cor1}}
\ead{matteo.parsani@kaust.edu.sa}

\cortext[cor1]{Corresponding author}

\fntext[fn1]{Research Scientist}
\fntext[fn2]{Ph.D. student}
\fntext[fn3]{Postdoctoral Fellow}
\fntext[fn4]{Senior Research Scientist}
\fntext[fn5]{Assistant Professor}

\address[KAUST]{King Abdullah University of Science and Technology (KAUST),
  Computer Electrical and Mathematical Science and Engineering Division (CEMSE),
  Extreme Computing Research Center (ECRC), Thuwal, Saudi Arabia}
\address[nia]{National Institute of Aerospace, Hampton, Virginia, United States}
\address[nasa]{Computational AeroSciences Branch, NASA Langley Research Center,
  Hampton, Virginia, United States}

\begin{abstract}
We present a novel technique for the imposition of non-linear entropy conservative and entropy stable
solid wall boundary conditions for the compressible Navier--Stokes equations in the presence of an adiabatic wall,
or a wall with a prescribed heat entropy flow. The procedure relies
on the \reftwo{formalism and mimetic properties} of \dadd{diagonal-norm}, summation-by-parts and simultaneous-approximation-term operators, \reftwo{and 
is a generalization of previous works on discontinuous interface coupling \cite{parsani_entropy_stable_interfaces_2015}} and
solid wall boundary conditions \reftwo{\cite{parsani_entropy_stability_solid_wall_2015}}.

Using the method of \dadd{lines}, a semi-discrete entropy estimate for the entire
domain is obtained when the proposed numerical imposition of boundary conditions are coupled with an
\dadd{entropy-conservative} or \dadd{entropy-stable} discrete interior operator. The resulting estimate
mimics the global entropy estimate obtained at the continuous level.
The boundary data at the wall are weakly imposed using a penalty
flux approach and a simultaneous-approximation-term technique for
both the conservative variables and the gradient of the entropy variables.

Discontinuous spectral collocation operators\dadd{ (mass lumped nodal discontinuous Galerkin operators),} on high-order unstructured grids\dadd{,}
are used for the purpose of demonstrating the robustness and efficacy of the new \dadd{procedure for 
weakly enforcing boundary conditions}.
Numerical simulations confirm the non-linear stability of the proposed \dadd{technique},
with applications to three-dimensional subsonic and supersonic flows.
The procedure described is  compatible with any \dadd{diagonal-norm} summation-by-parts spatial operator, including
finite element, finite difference, finite volume, discontinuous Galerkin, and
flux reconstruction schemes.
\end{abstract}

\begin{keyword}
  Compressible Navier--Stokes equations \sep Solid wall \sep Entropy conservation \sep Entropy stability
  \sep Summation-by-parts operators \sep Simultaneous-approximation-terms
\end{keyword}

\end{frontmatter}

\section{Introduction}

\dadd{Next-generation} numerical algorithms for use in large eddy simulations and
direct numerical simulations of computational fluid dynamics
will rely on efficient, high-order formulations, that are able to deliver better
accuracy per degree of freedom than low-order
methods, and that feature much smaller numerical errors both in \dadd{terms} of dispersion and dissipation
\cite{hesthaven_siam_book_2017,wang_high_order_workshop_2013}.
While these properties make high-order methods well suited for time-dependent
simulations, these techniques are more prone to instability \reftwo{when} compared to their
lower\dadd{-}order \reftwo{counterparts. This is because} numerical instabilities may occur if
the flow contains discontinuities or under-resolved physical features.
Various stabilization  strategies (e.g. filtering \reftwo{\cite{hesthaven_2008_nodal_dg}}, artificial  viscosity,
over-integration, and slope limiting \reftwo{\cite{wang_high_order_workshop_2013}} to cite a few) are commonly used to address these
issues. However,\dadd{ such stabilization techniques} possess several drawbacks since i) they reduce accuracy \reftwo{\cite{wang_high_order_workshop_2013}}, ii) they usually require
\reftwo{tuning parameters} for each problem configuration, and
iii) they do not guarantee that \reftwo{solvers designed to be high-order accurate in space will not crash.}

A very promising and mathematically rigorous alternative consists in focusing
on discrete operators that are non-linearly stable\reftwo{\footnote{We use the term rigorous because, as we will see in the next sections, these
operators can mimic at the discrete level the results of the non-linear stability analysis at the continuous level.}} or, as in the case of
the compressible Navier--Stokes equations,  \emph{entropy stable}. These operators simultaneously conserve
mass, momentum\dadd{,} and total energy. \dadd{In addition}, they satisfy a discrete analogue to the conservation
or dissipation of entropy which, with positivity assumptions on temperature and density, guarantees an
$L^2$ bound on the conservative variables \cite{dafermos_book_2010,svard_weak_solutions_mod_NS_2015}. We remark that the idea of enforcing entropy stability in numerical
methods is old and commonly used for low-order operators, see e.g.
\cite{hughes_finite_element_entropy_1986,tadmor_entropy_stability_1987}. For extensions to
high-order accurate operators see
\cite{fisher_entropy_stability_fd_2013,carpenter_entropy_stability_cfd_2013,carpenter_entropy_stability_ssdc_2016,friedrich_hp_entropy_stability_2018,chan_entropy_stability_dg_2018}.

Until recently, fully discrete entropy stability was mostly established for 
implicit time stepping schemes. However, Ranocha and colleagues \cite{ranocha2019relaxation} developped and
applied new
explicit Runge--Kutta schemes (i.e., relaxation Runge--Kutta schemes)
to entropy conservative or entropy dissipative
semi-discretizations of any order for the compressible Euler and Navier--Stokes equations.
The new time integration schemes can conserve or dissipate any
  solution properties with respect to any convex functional by
  the additional of a relaxation parameter that multiplies the Runge--Kutta
  update at each step. 
The general technique is not
limited to the compressible Euler and Navier--Stokes equations setting but can be applied to many ordinary differential
equations, and to both explicit and implicit Runge--Kutta methods.

However, issues remain on the path towards complete entropy
stability \reftwo{for} the compressible Navier--Stokes equations, e.g. shock capturing
and bound-preserving limiter for high-order accurate discretizations.
One 
major obstacle is the need for boundary conditions that
preserve the entropy conservation or stability property of the interior
operator. Practical experience indicates that numerical instabilities
frequently \dadd{originate} at domain boundaries; the interaction of shocks with these
physical boundaries is particularly challenging for high\dadd{-}order formulations. An
important step towards entropy stable wall boundary conditions
for the compressible Euler and Navier--Stokes equations appears in
\cite{parsani_entropy_stability_solid_wall_2015,svard_entropy_stable_euler_wall_2014,svard_entropy_stable_solid_wall_2018}.
More specifically, non-linearly stable wall
boundary conditions for the compressible Navier--Stokes equations are presented in \cite{parsani_entropy_stability_solid_wall_2015}. Therein,
it is shown that entropy stability requires two conditions to be satisfied: i) Euler no-penetration\dadd{, and}
ii) a prescribed value for the product of
temperature and the gradient of the temperature in the normal direction to the wall.
An additional term providing a controllable numerical
dissipation has to be introduced to impose
a zero relative velocity at the wall, i.e. the no-slip condition. Therefore,
the solid wall boundary conditions proposed in \cite{parsani_entropy_stability_solid_wall_2015}
are entropy stable, but not entropy conservative. Note that in \cite{svard_entropy_stable_solid_wall_2018}
it is shown that demanding a bound on velocity gradients necessitates the use
of the full no-slip conditions, i.e. the thermal and the relative velocity boundary
conditions.

In this work we present a general procedure for the development of point-wise entropy conservative
boundary conditions representing either an
adiabatic solid wall or a wall with a prescribed heat entropy flow
for the compressible Navier--Stokes equations, discretized by using \dadd{diagonal-norm}, summation-by-parts (SBP) and simultaneous-approximation-term (SAT) operators
(i.e. SBP-SAT operators).
Entropy conservation is obtained by penalizing, using a SAT penalty, both the entropy variables
and their gradients in the normal direction to the wall\dadd{,} \dadd{as in} the local discontinuous
Galerkin approach \cite{doi:10.1137/S0036142997316712}. The overall algorithm closely follows the
treatment of the discontinuous interior interfaces coupling presented in
\cite{parsani_entropy_stable_interfaces_2015}; a single implementation, with different inputs,
can be used for interface penalization and imposition of boundary conditions.
A controllable \reftwo{amount of} dissipation can be added to make the boundary conditions entropy stable.
The new procedure can be immediately applied to a moving wall, as will be shown in
the theoretical and numerical results sections. 


The manuscript is organized as follows. A brief review concerning the derivation of
continuous entropy inequalities and the entropy analysis of the viscous wall
boundary conditions for the compressible Navier--Stokes equations is provided in Section \ref{sec:entropy_continuous}.
\dadd{The weak,} point-wise\dadd{, imposition of} entropy conservative and entropy stable boundary conditions is carried out in Section \ref{sec:sc_adiabatic_no_slip_bc} for an adiabatic solid wall and for a wall with a prescribed heat entropy transfer.
Section \ref{sec:numerical_results} presents numerical results which
confirm the accuracy and stability of the proposed boundary conditions.
\reftwo{Conclusions} are drawn in Section \ref{sec:conclusions}. \dadd{Finally, in \ref{app:python_proofs} a Python script is provided that symbolically verifies all proofs for curvilinear grids, 
while in  \ref{app:fortran_code} a simple and dimension-agnostic implementation of the entropy stable solid wall boundary 
condition coded in FORTRAN is presented.}

\section{A brief review of entropy stability theory}\label{sec:entropy_continuous}
In this Section\dadd{,} we review the continuous entropy theory
for the compressible Navier--Stokes equations and the solid wall boundary
conditions by closely following \cite{carpenter_entropy_stable_staggered_2015,parsani_entropy_stability_solid_wall_2015}.

\subsection{The compressible Navier--Stokes equations}\label{subsec:compressible_ns}
To keep the presentation simple but without loss of generality, we consider the three-dimensional compressible Navier--Stokes equations in Cartesian coordinates $\left(\xone,\xtwo,\xthree\right)$ for an ideal gas in a bounded domain $\Omega$ with boundary $\Gamma$
\begin{equation}\label{eq:compressible_ns}
\begin{split}
&\frac{\partial\Q}{\partial t}+\sum\limits_{m=1}^{3}\frac{\partial \Fxm}{\partial \xm} =\sum\limits_{m=1}^{3}\frac{\partial\Fxmv}{\partial\xm}, \quad
\forall \left(\xone,\xtwo,\xthree\right)\in\Omega,\quad t\ge 0,\\
&\Q\left(\xone,\xtwo,\xthree,t\right)=\GB\left(\xone,\xtwo,\xthree,t\right),\quad\forall \left(\xone,\xtwo,\xthree\right)\in\Gamma,\quad t\ge 0,\\
&\Q\left(\xone,\xtwo,\xthree,0\right)=\Gzero\left(\xone,\xtwo,\xthree,0\right), \quad
\forall \left(\xone,\xtwo,\xthree\right)\in\Omega.
\end{split}
\end{equation}
The vectors $\Q$, $\Fxm$\dadd{,} and $\Fxmv$ respectively denote the conserved variables, the inviscid ($I$) fluxes, and the viscous ($V$) fluxes. The boundary data\dadd{,}
$\GB$, and the initial condition\dadd{,} $\Gzero$, are assumed to be in $L^{2}(\Omega)$, with the further assumption that $\GB$ will be set to coincide with linear well posed boundary conditions and such that entropy conservation/stability is achieved.

The vector of conserved variables is given as
\begin{equation}\label{eq:conserved_vars}
\Q = \left[\rho,\rho\Uone,\rho\Utwo,\rho\Uthree,\rho\E\right]\Tr,
\end{equation}
where $\rho$ denotes the density, $\bm{\fnc{U}} = \left[\Uone,\Utwo,\Uthree\right]\Tr$ is the velocity
vector, and $\E$ is the specific total energy. The inviscid fluxes are given by
\begin{equation}\label{eq:inviscid_flux}
\Fxm = \left[\rho\fnc{U}_{m},\rho\fnc{U}_{m}\Uone+\delta_{m,1}\fnc{P},\rho\fnc{U}_{m}\Utwo+\delta_{m,2}\fnc{P},\rho\fnc{U}_{m}\Uthree+\delta_{m,3}\fnc{P},\rho\fnc{U}_{m}\fnc{H}\right]\Tr,
\end{equation}
where $\fnc{P}$ is the pressure, $\fnc{H}$ is the specific total enthalpy\dadd{,} and $\delta_{i,j}$ is the
Kronecker delta. The viscous flux $\Fxmv$ is given as
\begin{equation}\label{eq:Fv}
\Fxmv=\left[0,\tau_{1,m},\tau_{2,m},\tau_{3,m},\sum\limits_{i=1}^{3}\tau_{i,m}\fnc{U}_{i}-\kappa\frac{\partial \fnc{T}}{\partial\xm}\right]\Tr,
\end{equation}
\dadd{where} $\kappa = \kappa(T)$ \dadd{is} thermal conductivity, and the viscous stresses \dadd{is }given by
\begin{equation}\label{eq:tau}
\tau_{i,j} = \mu\left(\frac{\partial\fnc{U}_{i}}{\partial x_{j}}+\frac{\partial\fnc{U}_{j}}{\partial x_{i}}
-\delta_{i,j}\frac{2}{3}\sum\limits_{n=1}^{3}\frac{\partial\fnc{U}_{n}}{\partial x_{n}}\right),
\end{equation}
\dadd{where} $\mu = \mu(T)$ \dadd{is }the dynamic viscosity.

The required constitutive relations are
\begin{equation*}
\fnc{H} = c_{\fnc{P}}\fnc{T}+\frac{1}{2}\bm{\fnc{U}}\Tr\bm{\fnc{U}},\quad \fnc{P} = \rho R \fnc{T},\quad R = \frac{R_{u}}{M_{w}},
\end{equation*}
where $c_{\fnc{P}}$ is the specific heat at constant pressure, \fnc{T} is the temperature, $R_{u}$ is the universal gas constant, and $M_{w}$ is the molecular weight of the gas. Finally,
the thermodynamic entropy is given as
\begin{equation*}
s=\frac{R}{\gamma-1}\log\left(\frac{\fnc{T}}{\fnc{T}_{\infty}}\right)-R\log\left(\frac{\rho}{\rho_{\infty}}\right),\quad \gamma=\frac{c_{\fnc{P}}}{c_{\fnc{P}}-R},
\end{equation*}
with $\fnc{T}_{\infty}$ and $\rho_{\infty}$ the reference temperature and density, respectively.

It is well known that the compressible Navier--Stokes equations given in \eqref{eq:compressible_ns} possess
a convex extension that, when integrated over the physical domain $\Omega$, only depends on the boundary \reftwo{data.
Such an extension} yields the entropy function
\begin{equation}\label{eq:entropy_function}
\fnc{S}=-\rho s,
\end{equation}
which is a useful tool for proving
stability in the $L^{2}$ norm \cite{dafermos_book_2010,svard_weak_solutions_mod_NS_2015}. We can then define the entropy
variables $\bm{W}=\partial\fnc{S}/\partial\Q$, \reftwo{which} for the compressible Navier--Stokes equations given in \eqref{eq:compressible_ns} \reftwo{are}
\begin{equation}\label{eq:entropy_variables}
  \W = \left[\frac{\fnc{H}-1/2\,\dadd{\bfnc{U}\Tr\bfnc{U}}}{\fnc{T}}-s-\frac{\dadd{\bfnc{U}\Tr\bfnc{U}}}{\fnc{T}},\frac{\Uone}{\fnc{T}},\frac{\Utwo}{\fnc{T}},\frac{\Uthree}{\fnc{T}},-\frac{1}{\fnc{T}}\right]\Tr.
\end{equation}
We remark that the convexity of $\fnc{S}$ guarantees the invertibility of the
mapping between conservative and entropy variables, provided that
the temperature $\fnc{T}$ and the density $\rho$ are positive. In what follows, we always assume that such positivity is preserved. 

The \dadd{vector of} entropy variables simultaneously
contracts all of the inviscid spatial fluxes $\Fxm$  as
\begin{equation}\label{eq:contraction_inviscid_flux}
\frac{\partial \fnc{S}}{\partial \bfnc{Q}}\frac{\partial\Fxm}{\partial \xm}=
\frac{\partial \fnc{S}}{\partial \bfnc{Q}}\frac{\partial\Fxm}{\partial \bfnc{Q}}\frac{\partial\bfnc{Q}}{\partial \xm}=
\frac{\partial\fnc{F}_{\xm}}{\partial\bfnc{Q}}\frac{\partial\bfnc{Q}}{\partial \xm}=
\frac{\partial\fnc{F}_{\xm}}{\partial \xm},\qquad m=1,2,3,
\end{equation}
where the
scalar $\fnc{F}_{\xm}(\bfnc{Q})$ denotes the entropy flux in the $m$-th direction.
By letting $\bfnc{W}$ \reftwo{take} the role \reftwo{as of} a new set of independent variables,
\ie $\bfnc{Q}=\bfnc{Q}(\bfnc{W})$, the entropy variables \eqref{eq:entropy_variables} symmetrize
the system \eqref{eq:compressible_ns} as \reftwo{\cite{dutt_1988_stble_bc_ns}}
\begin{equation}\label{eq:symmetric_nse}
  \frac{\partial\bfnc{Q}}{\partial\bfnc{W}}\frac{\partial\bfnc{W}}{\partial t}+\sum\limits_{m=1}^{3}\frac{\partial\bfnc{F}_{\xm}^{(I)}}{\partial\bfnc{W}}\frac{\partial\bfnc{W}}{\partial \xm} = \sum\limits_{m,j=1}^{3}\frac{\partial}{\partial\xm} \left(\Cij{m}{j}\frac{\partial\W}{\partial x_j}\right), 
\end{equation}
where the viscous fluxes $\Fxmv$ have been recast in term of the entropy variables as
\begin{equation}\label{eq:Fxment}
\Fxmv=\sum\limits_{j=1}^{3}\Cij{m}{j}\frac{\partial\bfnc{W}}{\partial x_{j}}.
\end{equation}
For the definition of the symmetric \dadd{and} semi-definite $\Cij{m}{j}$ matrices see \cite{fisher_phd_2012,parsani_entropy_stability_solid_wall_2015}.

Due to the \reftwo{symmetric nature} of $\partial\bfnc{Q}/\partial\bfnc{W}$ and $\partial\bfnc{F}_{\xm}^{(I)}/\partial\bfnc{W}$,
there exist scalar functions in entropy variables whose Jacobians represent the conservative variables $\Q$ and the inviscid fluxes $\Fxm$ as
\begin{equation}
  \bfnc{Q}\Tr=\frac{\partial\fnc{\varPhi}}{\partial\bfnc{W}},\quad\left(\bfnc{F}_{\xm}^{(I)}\right)^{\mr{\top}}=\frac{\partial\fnc{\varPsi}_{\xm}}{\partial\bfnc{W}}.
\end{equation}
$\fnc{\varPhi}$ is the \emph{potential}, whereas the $\fnc{\varPsi}_{\xm}$ functions are the
\emph{potential fluxes}\dadd{ in the $\xm$ direction}, with $\left(\fnc{\varPhi},\fnc{\varPsi}_{\xm}\right)$ the \emph{potential-potential flux} pair \cite{tadmor_review_entropy_analysis_2003}.
A close relation between entropy and the potential-potential flux pair is summarized in the following Theorem \cite{Godunov1961}, which is due to Godunov (see also \cite{harten1983}).
\begin{theorem}
\label{th:Godunov}
If a system of conservation laws can be symmetrized by introducing new variables $\bfnc{W}$, and
$\bfnc{Q}$ is a convex function of $\fnc{\varPhi}$, then an entropy function
$\fnc{S}=\fnc{S}(\bfnc{Q})$ is given by
\begin{equation}
\label{eq:GodSpotential}
 \fnc{\varPhi} = \bfnc{W}\Tr\bfnc{Q} - \fnc{S},
\end{equation}
and the entropy fluxes $\fnc{F}_{\xm}(\bfnc{Q})$ satisfy
\begin{equation}
\label{eq:GodFpotential}
\quad \fnc{\varPsi}_{\xm} = \bfnc{W}\Tr \Fxm - \fnc{F}_{\xm}.
\end{equation}
\end{theorem}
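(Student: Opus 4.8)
The plan is to read the hypotheses as follows: symmetrizability supplies a change of variables $\Q\mapsto\W$ for which $\partial\Q/\partial\W$ and $\partial\Fxm/\partial\W$ are symmetric, and (as already recorded just above the statement) this symmetry guarantees the existence of the potential $\fnc{\varPhi}$ and potential fluxes $\fnc{\varPsi}_{\xm}$ with $\Q\Tr=\partial\fnc{\varPhi}/\partial\W$ and $\Fxm\Tr=\partial\fnc{\varPsi}_{\xm}/\partial\W$. The convexity assumption I will use in the form that $\fnc{\varPhi}$ is a strictly convex function of $\W$, equivalently $\partial\Q/\partial\W=\partial^{2}\fnc{\varPhi}/\partial\W^{2}\succ0$; this makes the gradient map $\W\mapsto\Q=\nabla_{\W}\fnc{\varPhi}$ a diffeomorphism and legitimizes the Legendre transform below. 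The argument is then simply to define $\fnc{S}$ and $\fnc{F}_{\xm}$ by the stated formulas and verify that they constitute a genuine convex entropy--entropy-flux pair.

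First I would define $\fnc{S}:=\W\Tr\Q-\fnc{\varPhi}$, which is exactly \eqref{eq:GodSpotential} rearranged, and note that because $\W\mapsto\Q$ is invertible we may regard $\W$, and hence $\fnc{\varPhi}$, as functions of $\Q$, so that $\fnc{S}=\fnc{S}(\Q)$ is the Legendre transform of $\fnc{\varPhi}$. Differentiating with respect to $\Q$ and using $\partial\fnc{\varPhi}/\partial\W=\Q\Tr$, the product-rule term $\sum_{i}(\partial W_{i}/\partial\Q)\,Q_{i}$ arising from $\W\Tr\Q$ cancels against the chain-rule expansion of $\partial\fnc{\varPhi}/\partial\Q$, leaving $\partial\fnc{S}/\partial\Q=\W\Tr$. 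Thus $\W$ is recovered as the gradient of $\fnc{S}$ in the conservative variables, matching the definition $\W=\partial\fnc{S}/\partial\Q$ of the entropy variables. Convexity of $\fnc{S}$ then follows at once: $\partial^{2}\fnc{S}/\partial\Q^{2}=\partial\W/\partial\Q=(\partial\Q/\partial\W)^{-1}$ is the inverse of a symmetric positive-definite matrix, hence itself symmetric positive-definite, so $\fnc{S}$ is a bona fide convex entropy function.

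Next I would define $\fnc{F}_{\xm}:=\W\Tr\Fxm-\fnc{\varPsi}_{\xm}$, which is \eqref{eq:GodFpotential}, and verify the compatibility that characterizes an entropy pair, namely $\partial\fnc{F}_{\xm}/\partial\Q=\W\Tr\,\partial\Fxm/\partial\Q$. Differentiating and using $\partial\fnc{\varPsi}_{\xm}/\partial\W=\Fxm\Tr$, the same mechanism applies: the term $\sum_{i}(\partial W_{i}/\partial\Q)\,(\Fxm)_{i}$ produced by $\W\Tr\Fxm$ is cancelled exactly by the chain-rule expansion of $\partial\fnc{\varPsi}_{\xm}/\partial\Q$, leaving $\partial\fnc{F}_{\xm}/\partial\Q=\W\Tr\,\partial\Fxm/\partial\Q$. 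This is precisely the contraction \eqref{eq:contraction_inviscid_flux}, so contracting the inviscid system with $\W\Tr=\partial\fnc{S}/\partial\Q$ gives $\partial_{t}\fnc{S}+\sum_{m}\partial_{\xm}\fnc{F}_{\xm}=0$ for smooth solutions, confirming that $(\fnc{S},\fnc{F}_{\xm})$ is the entropy--entropy-flux pair generated by the symmetrization.

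The two cancellation computations are routine once the Legendre structure is in place; the only genuinely delicate point is the correct use of the convexity hypothesis. I would take care to show that $\fnc{S}$ is well defined \emph{as a function of $\Q$} — which requires the invertibility of $\W\mapsto\Q$ guaranteed by $\partial\Q/\partial\W\succ0$ — and that this same positive-definiteness transfers through the matrix inverse to yield convexity of $\fnc{S}$ in $\Q$. Everything else is the Poincar\'e-type existence of the potentials (already granted by the symmetry of the Jacobians) combined with the product- and chain-rule identities above.
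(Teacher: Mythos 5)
Your argument is correct, but note that the paper itself offers no proof of this theorem: it is stated as a classical result due to Godunov, with the proof deferred to the cited references \cite{Godunov1961,harten1983}. What you have written is essentially the standard Legendre-transform verification contained in that literature. Granting (from the symmetry of $\partial\bfnc{Q}/\partial\bfnc{W}$ and $\partial\Fxm/\partial\bfnc{W}$, as the paper records just above the statement) the existence of potentials with $\bfnc{Q}\Tr=\partial\fnc{\varPhi}/\partial\bfnc{W}$ and $\left(\Fxm\right)\Tr=\partial\fnc{\varPsi}_{\xm}/\partial\bfnc{W}$, you define $\fnc{S}=\bfnc{W}\Tr\bfnc{Q}-\fnc{\varPhi}$ and $\fnc{F}_{\xm}=\bfnc{W}\Tr\Fxm-\fnc{\varPsi}_{\xm}$ and verify by the product/chain-rule cancellations that $\partial\fnc{S}/\partial\bfnc{Q}=\bfnc{W}\Tr$, that $\partial^{2}\fnc{S}/\partial\bfnc{Q}^{2}=\left(\partial\bfnc{Q}/\partial\bfnc{W}\right)^{-1}\succ0$, and that $\partial\fnc{F}_{\xm}/\partial\bfnc{Q}=\bfnc{W}\Tr\,\partial\Fxm/\partial\bfnc{Q}$, which is exactly the compatibility relation \eqref{eq:contraction_inviscid_flux}. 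Both cancellation computations are right, and defining the pair by the stated formulas and checking it is a genuine convex entropy--entropy-flux pair is the correct reading of what the theorem asserts.

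Two small points deserve care. First, the hypothesis as printed --- ``$\bfnc{Q}$ is a convex function of $\fnc{\varPhi}$'' --- is imprecisely phrased; your reformulation (strict convexity of $\fnc{\varPhi}$ as a function of $\bfnc{W}$, equivalently $\partial\bfnc{Q}/\partial\bfnc{W}\succ0$) is the standard and correct reading, and it was right to state it explicitly rather than use the hypothesis verbatim. Second, positive-definiteness of the Hessian gives only \emph{local} invertibility of $\bfnc{W}\mapsto\bfnc{Q}$ via the inverse function theorem, whereas you assert a diffeomorphism; the one-line repair is that the gradient map of a strictly convex $C^{1}$ function is globally injective on a convex domain, since $\bigl(\nabla_{\bfnc{W}}\fnc{\varPhi}(\bfnc{W}_{1})-\nabla_{\bfnc{W}}\fnc{\varPhi}(\bfnc{W}_{2})\bigr)\Tr\left(\bfnc{W}_{1}-\bfnc{W}_{2}\right)>0$ whenever $\bfnc{W}_{1}\neq\bfnc{W}_{2}$. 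Neither point affects the substance: your write-up is a complete and correct proof of the result the paper states without proof.
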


\reftwo{By contracting the system of equations \eqref{eq:compressible_ns} with the entropy variables,
\begin{equation}
\label{eq:entropy_equation_rev2}
\begin{aligned}
  \frac{\partial \fnc{S}}{\partial \Q} \frac{\partial \Q}{\partial t} +
  \sum\limits_{m=1}^{3}\frac{\partial \fnc{S}}{\partial \Q} \frac{\partial \Fxm}{\partial \xm} & =
  \sum\limits_{m=1}^{3}\frac{\partial \fnc{S}}{\partial \Q} \frac{\partial \Fxmv}{\partial \xm},
\end{aligned}
\end{equation}
and applying the relations given in \eqref{eq:contraction_inviscid_flux},
\eqref{eq:symmetric_nse}, and \eqref{eq:Fxment}, we arrive at the differential form of the (scalar) entropy equation
\begin{equation}
\label{eq:entropy_equation}
\begin{aligned}
  \sum\limits_{m=1}^{3}\left[\frac{\partial \fnc{S}}{\partial t} + \frac{\partial \fnc{F}_{\xm}}{\partial \xm}\right]
  & =\sum\limits_{m=1}^{3}\left(\frac{\partial}{\partial \xm}\left(\W^{\top} \Fxmv\right) -
  \left(\frac{\partial \W}{\partial \xm}\right)^{\top} \Fxmv\right) \\
  & =\sum\limits_{m=1}^{3}\left(\frac{\partial}{\partial \xm}\left(\W^{\top} \Fxmv\right) -
  \sum\limits_{j=1}^{3}\left(\frac{\partial \W}{\partial x_m}\right)^{\top} \Cij{m}{j} \,
  \frac{\partial \W}{\partial x_j}\right).
\end{aligned}
\end{equation}
}

To obtain a global conservation statement for the entropy function $\fnc{S}$,
we then integrate equation \eqref{eq:entropy_equation} over the domain $\Omega$
\begin{equation}
 \label{eq:continuous_entropy_estimate_discont}
  \begin{split}
  \frac{\mr{d}}{\mr{d}t}\int_{\Omega}\fnc{S}\mr{d}\Omega
  &\le \sum\limits_{m=1}^{3}\int_{\Gamma}\left(\W\Tr \Fxmv - \fnc{F}_{\xm} \right)\nxm\mr{d}\Gamma
    -\sum_{m,j=1}^3 \int_{\Omega} \left(\frac{\partial \W}{\partial x_m}\right)^{\top} \Cij{m}{j} \,
  \frac{\partial \W}{\partial x_j} \mr{d}\Omega\\
&=\sum\limits_{m=1}^{3}\int_{\Gamma}\left(\W\Tr \Fxmv - \fnc{F}_{\xm} \right)\nxm\mr{d}\Gamma - DT,
\end{split}
\end{equation}
where $\nxm$ is the $m$-th component of the outward facing unit normal and
\[
DT = \sum_{m,j=1}^3 \int_{\Omega} \left(\frac{\partial \W}{\partial x_m}\right)^{\top} \Cij{m}{j} \,
  \frac{\partial \W}{\partial x_j} \mr{d}\Omega.
\]

We remark that viscous dissipation always introduces a negative rate of change in entropy, 
\dadd{since} the $-DT$ term in \eqref{eq:continuous_entropy_estimate_discont} is negative semi-definite. 
An increase in entropy within the domain can only result from
data that convects or diffuses through the boundaries $\Gamma$. For smooth flows, we finally note that the inequality sign in \eqref{eq:continuous_entropy_estimate_discont} becomes an equality.


\subsection{No-slip wall boundary conditions}\label{subsec:no_slip_bc}

For simplicity, we let the domain of interest be $\Omega=[0,1]^3$ and we only consider entropy
conservation (i.e., the equality relation in \eqref{eq:continuous_entropy_estimate_discont}). 
Thus, expanding the notation in equation
\eqref{eq:continuous_entropy_estimate_discont} yields
\begin{equation}\label{eq:continuous_entropy_estimate}
\begin{aligned}
  & \frac{\mr{d}}{\mr{d}t} \int_{\Omega} \fnc{S} \, \mr{d}x_1 \, \mr{d}x_2 \, \mr{d}x_3 =  -DT  \\
  & \:+\: \int_{x_1=0}  \left[+\fnc{F}_{x_1} - \, \W^{\top}
  \left(\Cij{1}{1} \frac{\partial \W}{\partial x_1}
  +     \Cij{1}{2} \frac{\partial \W}{\partial x_2} +
        \Cij{1}{3} \frac{\partial \W}{\partial x_3} \right)
                                                         \right] \mr{d}x_2 \, \mr{d}x_3 \\
  & \:+\: \int_{x_1=1}  \left[-\fnc{F}_{x_1} +  \, \W^{\top}
  \left(\Cij{1}{1} \frac{\partial \W}{\partial x_1}
  +     \Cij{1}{2} \frac{\partial \W}{\partial x_2} +
        \Cij{1}{3} \frac{\partial \W}{\partial x_3} \right)
                                                         \right] \mr{d}x_2 \, \mr{d}x_3 \\
  & \:+\: \int_{x_2=0}  \left[+\fnc{F}_{x_2} - \, \W^{\top}
  \left(\Cij{2}{1} \frac{\partial \W}{\partial x_1}
  +     \Cij{2}{2} \frac{\partial \W}{\partial x_2} +
        \Cij{2}{3} \frac{\partial \W}{\partial x_3} \right)
                                                         \right] \mr{d}x_1 \, \mr{d}x_3 \\
  & \:+\: \int_{x_2=1} \left[-\fnc{F}_{x_2} + \, \W^{\top}
  \left(\Cij{2}{1} \frac{\partial \W}{\partial x_1}
  +     \Cij{2}{2} \frac{\partial \W}{\partial x_2} +
        \Cij{2}{3} \frac{\partial \W}{\partial x_3} \right)
                                                         \right] \mr{d}x_1 \, \mr{d}x_3 \\
  & \:+\: \int_{x_3=0}  \left[+\fnc{F}_{x_3} - \, \W^{\top}
  \left(\Cij{3}{1} \frac{\partial \W}{\partial x_1}
  +     \Cij{3}{2} \frac{\partial \W}{\partial x_2} +
        \Cij{3}{3} \frac{\partial \W}{\partial x_3} \right)
                                                         \right] \mr{d}x_1 \, \mr{d}x_2 \\
  & \:+\: \int_{x_3=1} \left[-\fnc{F}_{x_3} + \, \W^{\top}
  \left(\Cij{3}{1} \frac{\partial \W}{\partial x_1}
  +     \Cij{3}{2} \frac{\partial \W}{\partial x_2} +
        \Cij{3}{3} \frac{\partial \W}{\partial x_3} \right)
                                                         \right] \mr{d}x_1 \, \mr{d}x_2 \, .
\end{aligned}
\end{equation}
Note that the plus and minus signs within the integrand terms of \eqref{eq:continuous_entropy_estimate} account for the
direction of the outward facing normals \dadd{on} the six faces of the unit cube $\Omega$.

Furthermore, without loss of generality, we consider the case of a wall placed at $x_1=0$ \reftwo{such that the normal vector is \dadd{$\bm n = (-1,0,0)^{\top}$}}, and we assume that all the other boundaries terms are
entropy conservative, which allows us to neglect their contributions. Then, estimate \eqref{eq:continuous_entropy_estimate} reduces to
\begin{equation}\label{eq:continuous_entropy_estimate_1wall}
\begin{aligned}
  & \frac{\mr{d}}{\mr{d}t} \int_{\Omega} S \, \mr{d}x_1 \, \mr{d}x_2 \, \mr{d}x_3 =  - DT  \\
  & \:+\: \int_{x_1=0}  \left[\fnc{F}_{x_1} - \, \W^{\top}
  \left(\Cij{1}{1} \frac{\partial \W}{\partial x_1}
  +     \Cij{1}{2} \frac{\partial \W}{\partial x_2} +
        \Cij{1}{3} \frac{\partial \W}{\partial x_3} \right)
                                                         \right] \mr{d}x_2 \, \mr{d}x_3 \,.
\end{aligned}
\end{equation}

Within the context of linear analysis for a solid viscous wall, the wall behaves like a subsonic outflow
\cite{nordstrom_well_posedness_2005}, \dadd{and} four independent boundary conditions must be imposed to prove energy (linear) stability
 \cite{kreiss_ibvc_book_1989,svard_linear_wall_bc_2008,berg_linear_robin_wall_bc_2011} \dadd{(}see also \cite{parsani_entropy_stability_solid_wall_2015} and the references therein\dadd{)}. The first three correspond to the
no-slip boundary conditions $\Uone = \Utwo = \Uthree = 0$ that impose a zero relative velocity with respect to the wall.
The fourth condition can be either imposed on the
gradient of the temperature normal to the wall
$(\partial \fnc{T} / \partial n)_{wall}$ (Neumann boundary condition\dadd{,} e.g. the adiabatic wall),
or to the temperature at the wall $\fnc{T}_{wall}$, (the Dirichlet or isothermal wall boundary condition),
or a mixture of these two (the Robin boundary condition) \cite{svard_linear_wall_bc_2008,berg_linear_robin_wall_bc_2011}.


In the non-linear case, entropy conservation and entropy stability in the adiabatic solid wall case or
a wall with a prescribed heat entropy flow are attained by means of the next two theorems.
These theorems provide the conditions \dadd{that result in a bound on} the time rate of change of the entropy function
in \eqref{eq:continuous_entropy_estimate_1wall}, and are point-wise valid \cite{parsani_entropy_stability_solid_wall_2015}.
The first theorem is a generalization of Theorem 3.1 presented
in \cite{parsani_entropy_stability_solid_wall_2015} to a moving wall.
\begin{theorem}\label{th:continous_inviscid_bound}
  The no-slip boundary conditions $\ensuremath{\fnc{U}_{1}} = 0$ and
  $\ensuremath{\fnc{U}_{m}} = \ensuremath{\fnc{U}_{m}^{wall}}, \, m=2,3$ bound the inviscid
contribution to the time derivative of the entropy in equation
\eqref{eq:continuous_entropy_estimate_1wall}.
\end{theorem}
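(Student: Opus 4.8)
The plan is to isolate the inviscid part of the wall contribution in the reduced balance \eqref{eq:continuous_entropy_estimate_1wall} and to show that the no-penetration condition forces it to vanish pointwise on the wall. The only inviscid boundary term in \eqref{eq:continuous_entropy_estimate_1wall} is the entropy-flux integral $\int_{\xone=0}\fnc{F}_{\xone}\,\mr{d}x_2\,\mr{d}x_3$; the three $\Cij{1}{j}$ terms are viscous and are deferred to the companion viscous theorem. It therefore suffices to control the scalar entropy flux $\fnc{F}_{\xone}$ evaluated on the wall.

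First I would make the structure of $\fnc{F}_{\xone}$ explicit, for which two equivalent routes are available. The route internal to the present framework is to invoke the Godunov relation \eqref{eq:GodFpotential}, $\fnc{F}_{\xone}=\W\Tr\Fxgen{1}-\fnc{\varPsi}_{\xone}$, to identify the potential flux $\fnc{\varPsi}_{\xone}\propto\rho\Uone$, and to contract the explicit entropy variables \eqref{eq:entropy_variables} against the explicit inviscid flux \eqref{eq:inviscid_flux}. The shortcut is to recall that for smooth inviscid flow the entropy transport equation yields the physical flux $\fnc{F}_{\xm}=-\rho s\,\fnc{U}_{m}=\fnc{S}\,\fnc{U}_{m}$ with $\fnc{S}$ as in \eqref{eq:entropy_function}, so that $\fnc{F}_{\xone}=\fnc{S}\,\Uone$. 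Either way the key structural observation is that every surviving term is proportional to the wall-normal velocity $\Uone$: in the contraction $\W\Tr\Fxgen{1}$ the mass, momentum, and energy entries all carry an explicit factor $\rho\Uone$, while the lone pressure entry $\fnc{P}$ multiplies the second entropy variable $w_2=\Uone/\fnc{T}$ and therefore also vanishes with $\Uone$.

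The final step is to impose the no-penetration condition $\Uone=0$ and conclude $\fnc{F}_{\xone}\big|_{\xone=0}=0$, so that the inviscid boundary integral in \eqref{eq:continuous_entropy_estimate_1wall} vanishes identically; this is in fact exact conservation, which is stronger than the claimed bound. Two points are worth stressing for the moving-wall generalization. First, the tangential data $\fnc{U}_{m}=\fnc{U}_{m}^{wall}$, $m=2,3$, do not enter this argument: they appear in the inviscid flux only premultiplied by $\Uone$ (through $\rho\Uone\fnc{U}_{m}$) and are instead needed to close the viscous boundary term in the subsequent theorem. Second, because the argument is purely pointwise and uses only $\Uone=0$, it is insensitive to nonzero tangential wall motion, which is precisely why the stationary-wall result of \cite{parsani_entropy_stability_solid_wall_2015} carries over verbatim to a moving wall.

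The main obstacle is bookkeeping rather than conceptual: one must verify that after contracting \eqref{eq:entropy_variables} with \eqref{eq:inviscid_flux} and subtracting $\fnc{\varPsi}_{\xone}$ the resulting scalar genuinely factors through $\Uone$, with no residual pressure, enthalpy, or tangential-velocity contribution surviving. This requires consistent use of the constitutive relations $\fnc{H}=c_{\fnc{P}}\fnc{T}+\tfrac{1}{2}\bm{\fnc{U}}\Tr\bm{\fnc{U}}$ and $\fnc{P}=\rho R\fnc{T}$ together with the definition of $s$; once the factorization $\fnc{F}_{\xone}=\fnc{S}\,\Uone$ is confirmed, the no-penetration condition delivers the result immediately.
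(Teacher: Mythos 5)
Your proof is correct and follows essentially the same route as the paper: both invoke the Godunov relation \eqref{eq:GodFpotential} to write $\fnc{F}_{x_1}=\W\Tr\Fxgen{1}-\fnc{\varPsi}_{x_1}$, observe that the potential flux and the contraction are each proportional to the wall-normal velocity (so that $\fnc{F}_{x_1}=-\rho s\,\Uone R$), and conclude $\fnc{F}_{x_1}=0$ pointwise from $\Uone=0$ alone. Your additional remarks---that the tangential data $\fnc{U}_{m}=\fnc{U}_{m}^{wall}$ never enter the inviscid argument, which is exactly why the stationary-wall result extends to a moving wall---match the paper's framing of this theorem as a moving-wall generalization of the earlier result.
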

\begin{proof}
Equation \eqref{eq:GodFpotential} provides the following relation for the entropy flux
\begin{equation}
	\fnc{F}_{x_1} = \W^{\top} \ensuremath{\bm{\fnc{F}}_{x_1}^{(I)}} - {\fnc{\varPsi}}_{x_1}=-\rho s\Uone R, \quad {\fnc{\varPsi}}_{x_1} = -\rho \Uone R.
\end{equation}
Substituting the no-slip conditions,  into the definition
of the inviscid flux, $\ensuremath{\bm{\fnc{F}}_{x_1}^{(I)}}$, (equation \eqref{eq:inviscid_flux})
and the condition $\Uone = 0$ into the definition
of $\dadd{{\fnc{\varPsi}}_{x_1}}$, yields the desired result  $\fnc{F}_{x_1} = 0$.
\end{proof}
\begin{remark}
  In a general setting, the no-slip boundary conditions read as
  $\ensuremath{\bm{\fnc{U}}} = \ensuremath{\bm{\fnc{U}}}^{wall}$ with
  $\ensuremath{\bm{\fnc{U}}}^{wall}~\cdot~{\bm n } = 0$, where $\ensuremath{\bm{\fnc{U}}}$, $\ensuremath{\bm{\fnc{U}}}^{wall}$
  and ${\bm{n}}$ denote the velocity vector of the fluid, the velocity vector of the wall and the unit normal vector,
  respectively.
\end{remark}

\begin{theorem}\label{th:continous_viscous_bound}
  \reftwo{The boundary condition
  \begin{equation}\label{eq:temperature_continous}
    \mathtt{g}(t) = \kappa \frac{\partial \fnc{T}}{\partial n} \frac{1}{\fnc{T}},
  \end{equation}
	where $\partial \fnc{T}/\partial n$ denotes the normal derivative of $\fnc{T}$, bounds the viscous contribution to the time derivative of the entropy
	\eqref{eq:continuous_entropy_estimate_1wall}}.
\end{theorem}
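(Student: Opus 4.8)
The plan is to show that, at the wall, the contraction of the viscous flux with the entropy variables collapses to a pure heat-conduction term, and that this term is precisely what the prescribed datum $\mathtt{g}(t)$ of \eqref{eq:temperature_continous} controls. Concretely, the viscous contribution to the boundary integrand in \eqref{eq:continuous_entropy_estimate_1wall} is $\W^{\top}\Fxgenv{1}$, where by the compact representation \eqref{eq:Fxment} the bracketed sum $\Cij{1}{1}\frac{\partial \W}{\partial x_1}+\Cij{1}{2}\frac{\partial \W}{\partial x_2}+\Cij{1}{3}\frac{\partial \W}{\partial x_3}$ is nothing but the viscous flux vector $\Fxgenv{1}$ in \eqref{eq:Fv}. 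I would first make this replacement so as to work with the explicit five-component flux rather than with the $\mat{C}_{1,j}$ matrices.

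Next I would carry out the contraction componentwise using the entropy variables \eqref{eq:entropy_variables} and the viscous flux \eqref{eq:Fv}. Since the mass component of $\Fxgenv{1}$ vanishes, the first entropy variable drops out. The three momentum components contribute $\frac{1}{\fnc{T}}\sum_{i=1}^{3}\fnc{U}_{i}\tau_{i,1}$, while the energy component contributes $-\frac{1}{\fnc{T}}\big(\sum_{i=1}^{3}\tau_{i,1}\fnc{U}_{i}-\kappa\,\frac{\partial \fnc{T}}{\partial x_1}\big)$. The velocity--stress products cancel identically, leaving
\[
\W^{\top}\Fxgenv{1} = \frac{\kappa}{\fnc{T}}\frac{\partial \fnc{T}}{\partial x_1}.
\]
I would emphasize here that this cancellation is purely algebraic and does \emph{not} invoke the no-slip conditions of Theorem~\ref{th:continous_inviscid_bound}; consequently the result depends only on the thermal state and remains valid for a moving wall.

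Finally I would convert the Cartesian derivative to the outward normal derivative. With the wall at $x_1=0$ and outward unit normal $\bm{n}=(-1,0,0)^{\top}$, one has $\frac{\partial \fnc{T}}{\partial n}=-\frac{\partial \fnc{T}}{\partial x_1}$, so that $\W^{\top}\Fxgenv{1}=-\mathtt{g}(t)$ by the definition \eqref{eq:temperature_continous}. Substituting into the viscous boundary term $-\int_{x_1=0}\W^{\top}\Fxgenv{1}\,\mr{d}x_2\,\mr{d}x_3$ of \eqref{eq:continuous_entropy_estimate_1wall} shows that the viscous contribution equals $\int_{x_1=0}\mathtt{g}(t)\,\mr{d}x_2\,\mr{d}x_3$, hence it is entirely fixed by the prescribed datum: it vanishes for the adiabatic wall $\mathtt{g}\equiv 0$ (entropy conservation) and is bounded by the prescribed heat entropy flow otherwise.

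The computation is short, so I do not anticipate a deep obstacle; the only points demanding care are the sign and orientation bookkeeping when passing from $\frac{\partial \fnc{T}}{\partial x_1}$ to the outward normal derivative at $x_1=0$, and verifying explicitly that the velocity--stress terms cancel \emph{before} any boundary condition is imposed, so that the argument genuinely isolates the thermal datum rather than relying on the no-slip conditions established separately in Theorem~\ref{th:continous_inviscid_bound}.
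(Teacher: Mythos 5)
Your proposal is correct and follows essentially the same route as the paper, which simply defers to Theorem 3.2 of \cite{parsani_entropy_stability_solid_wall_2015}: there, as in your computation, one contracts the viscous flux $\Fxgenv{1}$ with the entropy variables $\W$, observes that the velocity--stress terms cancel identically (no appeal to no-slip), and is left with the heat-conduction term $\kappa\,\frac{1}{\fnc{T}}\frac{\partial \fnc{T}}{\partial x_1}$, which is precisely the quantity fixed by the datum $\mathtt{g}(t)$ --- exactly the identity recorded in the paper's remark following the theorem. Your additional care with the sign flip between $\partial\fnc{T}/\partial x_1$ and the outward normal derivative at $x_1=0$ is a welcome clarification, since the paper itself switches between the two conventions.
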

\begin{proof}
  See Theorem 3.2 in \cite{parsani_entropy_stability_solid_wall_2015}.
\end{proof}

\begin{remark}
	\reftwo{The scalar value, 
\begin{equation*}
  \kappa\left(\frac{\partial \fnc{T}}{\partial x_1} \frac{1}{\fnc{T}}\right) = \W^{\top}
  \left(\Cij{1}{1} \frac{\partial \W}{\partial x_1}
  +     \Cij{1}{2} \frac{\partial \W}{\partial x_2} +
	\Cij{1}{3} \frac{\partial \W}{\partial x_3} \right),
\end{equation*}}
  accounts for the change in entropy due to the wall heat flux at $x_1 = 0$ \cite{parsani_entropy_stability_solid_wall_2015} and
  is often denoted as heat entropy transfer or heat entropy flow \cite{bejan_entropy_generation_book}.
\end{remark}

\section{Entropy conservative and entropy stable solid wall boundary conditions
for the semi-discrete system}\label{sec:sc_adiabatic_no_slip_bc}

\dadd{To discretize in space}, we partition the physical domain $\Omega$ into non-overlapping hexahedral elements and we semi-discretize the system \eqref{eq:compressible_ns} using a multi-dimensional SBP operator, constructed from a one-dimensional SBP operator by \reftwo{way of} tensor
products. The nodal distribution within each element is based on
$N^3$ Legendre-Gauss-Lobatto (LGL) points \cite{carpenter_ssdc_2014,parsani_entropy_stability_solid_wall_2015,carpenter_entropy_stability_ssdc_2016,fernandez_entropy_stable_p_refinement_2018}, where $N$ is the number of LGL point in one direction.

Here, we summarize the relevant SBP operators used to discretize \eqref{eq:compressible_ns},
and to derive the new procedure to impose the solid wall boundary conditions.
\begin{equation}\label{eq:SBP-tensor-matrices}
\begin{gathered}
  \Dxone = \left(\Doned \otimes \dadd{\mat{I}_{N}} \otimes \dadd{\mat{I}_{N}} \otimes \dadd{\mat{I}_{5}}\right), \quad \cdots \quad
  \Dxthree = \left(\dadd{\mat{I}_{N}} \otimes \dadd{\mat{I}_{N}} \otimes \Doned \otimes \dadd{\mat{I}_{5}}\right), \\ \\
  \Qxone = \left(\Qoned \otimes \dadd{\mat{I}_{N}} \otimes \dadd{\mat{I}_{N}} \otimes \dadd{\mat{I}_{5}}\right), \quad  \cdots \quad
  \Qxthree = \left(\dadd{\mat{I}_{N}} \otimes \dadd{\mat{I}_{N}} \otimes \Qoned \otimes \dadd{\mat{I}_{5}}\right), \\ \\
  \Bxone = \left(\Boned \otimes \dadd{\mat{I}_{N}} \otimes \dadd{\mat{I}_{N}} \otimes \dadd{\mat{I}_{5}}\right), \quad  \cdots \quad
  \Bxthree = \left(\dadd{\mat{I}_{N}} \otimes \dadd{\mat{I}_{N}} \otimes \Boned \otimes \dadd{\mat{I}_{5}}\right), \\ \\
  {\Delta}_{x_1} = \left(\Delta_{N} \otimes \dadd{\mat{I}_{N}} \otimes \dadd{\mat{I}_{N}} \otimes \dadd{\mat{I}_{5}}\right), \quad\cdots \quad
  {\Delta}_{x_3} = \left(\dadd{\mat{I}_{N}} \otimes \dadd{\mat{I}_{N}} \otimes \Delta_{N} \otimes \dadd{\mat{I}_{5}}\right)\dadd{,} \\ \\
  \Pxgen{1} = \left(\Poned \otimes \dadd{\mat{I}_{N}} \otimes \dadd{\mat{I}_{N}} \otimes \dadd{\mat{I}_{5}}\right), \quad \cdots \quad
  \Pxgen{3} = \left(\dadd{\mat{I}_{N}} \otimes \dadd{\mat{I}_{N}} \otimes \Poned \otimes \dadd{\mat{I}_{5}}\right), \quad  \\ \\
  \Pxgentwod{1}{2} = \left(\Poned \otimes \Poned \otimes \dadd{\mat{I}_{N}} \otimes \dadd{\mat{I}_{5}}\right), \quad \cdots \quad
  \Pxgentwod{2}{3} = \left(\dadd{\mat{I}_{N}} \otimes \Poned \otimes \Poned \otimes \dadd{\mat{I}_{5}}\right), \\ \\
  \Pmatvol = \Pxgenthreed{1}{2}{3} = \left(\Poned \otimes \Poned \otimes \Poned \otimes \dadd{\mat{I}_{5}} \right), \\ \\
\end{gathered}
\end{equation}
$\Doned$, $\Qoned$, $\Boned$, $\Delta_N$ and $\Poned$  are the
one-dimensional SBP operators, and  $\dadd{\mat{I}_{N}}$ is
the identity
matrix of dimension $N$. 
The matrices
$\Pmat_{(\cdot)}$ may be thought of as mass matrices in the context of the \dadd{discontinuous} Galerkin
finite element method. 
Herein, the focus is exclusively on
\dadd{diagonal-norm} SBP operators, based on fixed element-based polynomials of order $p$ ($p=N-1$).
The matrices $\Dmat_{(\cdot)}$ \reftwo{are} used to approximate the first derivatives \reftwo{and} are
defined as $\Pmat^{-1}_{(\cdot)}\Qmat_{(\cdot)}$.
The nearly skew-symmetric matrices $\Qmat_{(\cdot)}$ are undivided differencing operators
where all rows sum to zero, and the first and last column sum to $-1$ and $1$ respectively.
The matrices $\Bmat_{(\cdot)}$ pick \dadd{off} the interface terms in the respective \reftwo{directions}. 
\dadd{For the spectral element discretization considered in this paper, the $\Bmat_{(\cdot)}$ matrices take 
on a particularly simple form; as an example, consider $\Bmat_{x_1}$, which is given as}
\begin{equation*}
\Bmat_{x_{1}} = \Bmat_{x_{1}}^{+}-\Bmat_{x_{1}}^{-},\quad 
\Bmat_{x_{1}}^{-}=\diag\left(1,0,\dots,0\right)\otimes\mat{I}_{N}\otimes\mat{I}_{N}\otimes\mat{I}_{5},\quad
\Bmat_{x_{1}}^{+}=\diag\left(0,\dots,0,1\right)\otimes\mat{I}_{N}\otimes\mat{I}_{N}\otimes\mat{I}_{5}.
\end{equation*}

\reftwo{For} a high-order accurate scheme on a tensor product cell, they pick \dadd{off} the
\dadd{values of whatever vector they act on (typically the solution or the flux)} at the nodes of the two opposite faces \dadd{multiplied by the orthogonal component of the unit normal}.

\reftwo{SBP operators can be recast in telescoping flux form \cite{fisher_phd_2012}.  For example,} 
\reftwo{
\begin{equation*}
\Dxone\fxmIlin{1}=\Pxgen{1}^{-1}\Qxone\fxmIlin{1}=\Pxgen{1}^{-1}\Delta_{x_1}\IStoFxm{1}\fxmIlin{1} =\Pxgen{1}^{-1}\Delta_{x_1}\fxmIlinbar{1},
\end{equation*} 
}
\reftwo{where $\fxmIlinbar{1}=\IStoFxm{1}\fxmIlin{1}$ and the one-dimensional telescoping operator, $\Delta_{N}$, is defined as}
\reftwo{
\begin{equation*}
\label{eq:delta}
 \Delta_N = \left(
 \begin{array}{cccccc}
  -1 & 1 & 0 & 0 & 0 & 0 \\
  0 & -1 & 1 & 0 & 0 & 0 \\
  0 & 0 & \ddots & \ddots & 0 & 0 \\
  0 & 0 & 0 & -1 & 1 & 0 \\
  0 & 0 & 0 & 0 & -1 & 1
 \end{array} 
 \right).
\end{equation*}}

\reftwo{The operator $\IStoFxm{1}$ interpolates the flux at the solution nodes on to a set of flux nodes (see Figure~\ref{fig:fluxnodes}).}
\begin{figure}
  \centering
  \includegraphics[width=0.6\textwidth]{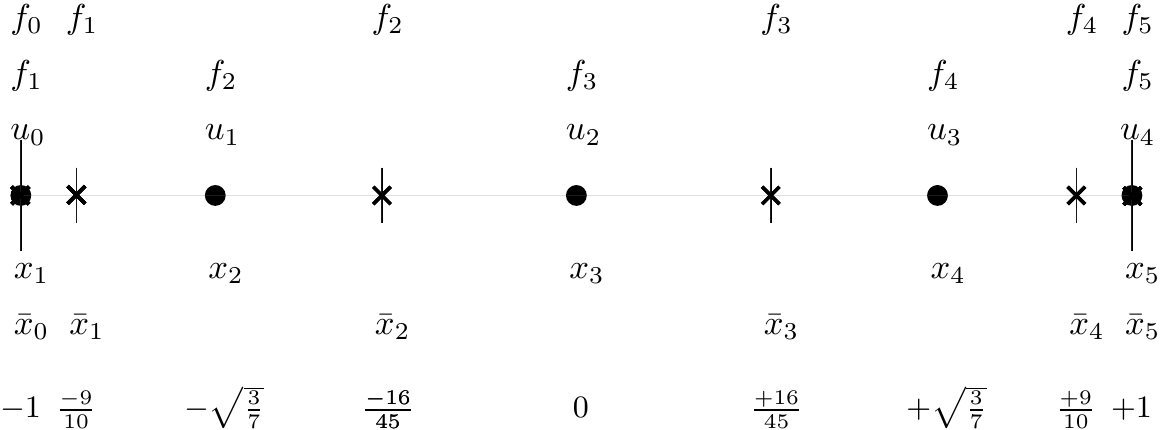}
	\caption{\reftwo{The one-dimensional discretization for $p=4$ Legendre collocation.
	Solution LGL points are denoted by $\bullet$ and flux points are denoted by $\times$.}}
  \label{fig:fluxnodes}
\end{figure}

When applying any of these operators to the scalar entropy equation in space,
a hat will be used to differentiate the scalar operator from the full vector
operator, e.g.
\begin{equation*}
 \widehat{\Pmatvol} = \left(\Poned \otimes \Poned \otimes \Poned \right).
\end{equation*}
\reftwo{We finally note that in the present work, the quadrature nodes and solution nodes are collocated.}

Using an SBP operator and its equivalent telescoping form, the
semi-discrete form of the three-dimensional compressible Navier--Stokes equations \eqref{eq:compressible_ns}
for Cartesian grids in each hexahedral element reads (see\dadd{,} e.g. \cite{carpenter_ssdc_2014,parsani_entropy_stability_solid_wall_2015})
\begin{equation}\label{eq:semi-discrete-element}
\begin{aligned}
  \frac{\partial \q}{\partial t} + \sum_{m=1}^3\left(\Pxgeninv{m} \Delta_{x_m} \fxmIlinbar{m} - \Dxgen{m} \fxmVlin{m}\right) =
    \sum_{m=1}^3\Pxgeninv{m} \left(\bm{{g}}_{x_m}^{(B)} + \bm{{g}}_{x_m}^{(In)}\right).
\end{aligned}
\end{equation}
We remark that we have omitted any term involved with the geometrical mapping of the elements in order to simplify the discussion.
\reftwo{The extension of the following analysis to curvilinear grids is
straightforward and all the theorems presented herein \dadd{remain} valid.
The interested reader is referred to the supplementary python script (see 
\ref{app:python_proofs}) which symbolically verifies all the proofs for the general case of curvilinear grids. Furthermore,
a simple, dimension-agnostic implementation of the entropy stable solid wall boundary conditions for the 
general case is
provided in \ref{app:fortran_code}.}

The vectors $\bm{{g}}_{x_m}^{(B)}$ in \eqref{eq:semi-discrete-element}
enforce the boundary conditions, while $\bm{{g}}_{x_m}^{(In)}$ patches
interfaces together using a SAT approach \cite{parsani_entropy_stable_interfaces_2015}.
The derivatives
appearing in the viscous fluxes $\fxmVlin{m}$ are also computed using the
operators $\Dmat_{x_m}$ defined in \eqref{eq:SBP-tensor-matrices}.

The discrete no-slip wall boundary conditions constructed herein follows a local discontinuous Galerkin-type approach
\cite{doi:10.1137/S0036142997316712}, where also the gradient of the entropy variables
is penalized. The penalization of the gradient represents one of the main novelties \reftwo{in} this work
within the context of SBP-SAT discretizations for the imposition of boundary
conditions. In fact, this is a key difference with respect to previous work
\cite{parsani_entropy_stability_solid_wall_2015} which introduced
for the first time an entropy stable approach to impose solid wall boundary
conditions.

Following the same procedure based on local discontinuous Galerkin (LDG) and 
interior penalty approach (IP)
described in \cite{ss-no-slip-wall-bc-parsani-nasa-tm-2014,parsani_entropy_stable_interfaces_2015},
equation \eqref{eq:semi-discrete-element} can be recast as
\begin{subequations}\label{eq:ldg-ip}
\begin{equation}\label{eq:ldg-ip-1}
\begin{aligned}
  \frac{\partial \q}{\partial t} + \sum_{m=1}^3\left(\Pxgeninv{m} \Delta_{x_m} \fxmIlinbar{m} - \sum\limits_{j=1}^{3}\Dxgen{m} \left[\Cij{m}{j}\right] \XiN{x_j}\right) =
    \sum_{m=1}^3\Pxgeninv{m} \left(\bm{{g}}_{x_m}^{(B),q} + \bm{{g}}_{x_m}^{(In),q}\right)
\end{aligned}
\end{equation}
\begin{equation}\label{eq:ldg-ip-2}
  \begin{aligned}
    \XiN{x_m} - \Dxgen{m} \w = \Pxgeninv{m} \left(\bm{{g}}_{x_m}^{(B),\Theta} + \bm{{g}}_{x_m}^{(In),\Theta}\right), \quad m =1,2,3,
\end{aligned}
\end{equation}
\end{subequations}
where $\XiN{x_m}$ \reftwo{is the gradient} of the
entropy variables in the $m$-th direction, whereas
$\bm{{g}}_{x_m}^{(B),q}$, $\bm{{g}}_{x_m}^{(B),\Theta}$ and
$\bm{{g}}_{x_m}^{(In),q}$, $\bm{{g}}_{x_m}^{(In),\Theta}$
are the SAT penalty boundary ($B$) and
interface ($I$) terms on the conservative variable $\q$, and the gradient of the entropy
variable $\bm{\Theta}$, respectively \cite{ss-no-slip-wall-bc-parsani-nasa-tm-2014}. The
contributions of the interface
penalty terms are non-zero only in the normal direction to the interface.
The matrices $\left[\Cij{m}{j}\right]$ are block diagonal matrices with blocks of size 5,
corresponding to the viscous coefficients at each solution point.

\begin{remark}\label{rm:ho_ss_flux-1}
\dadd{In order to build a high-order accurate entropy-conservative/stable spatial discretization, the linear interpolation 
operation, $\IStoFxm{m}$ is replaced with a non-linear interpolation operator (or equivalently, the linear SBP operator is 
replaced with a non-linear SBP operator)} \cite{tadmor_review_entropy_analysis_2003,fisher_phd_2012,carpenter_ssdc_2014,
  parsani_entropy_stability_solid_wall_2015}. \dadd{Critically, the resulting non-linear operator, 
$\Pxgen{m}^{-1}\Delta_{\xm}\fxmIlinbarsc{m}$ or $\Pxgen{m}^{-1}\Delta_{\xm}\fxmIlinbarssr{m}$ (entropy conservative or entropy stable, respectively), has the property that when contracted with the entropy variables and 
discretely integrated over the domain, the result is a discrete surface integral 
with respect to the entropy flux; that is, for the entropy-conservative formulation, the operator telescopes in the entropy flux. For example
\begin{equation*}
\bm{w}\Tr\mat{P}\Pxgen{1}^{-1}\Delta_{x_1}\fxmIlinbarsc{1}=\bm{1}\Tr\left(\Pxgentwodhat{2}{3} \Bxgenhat{1} \bm{F}_{x_1}\right)
\approx\int_{x_{1}=1}\fnc{F}_{x_{1}}\mr{d}x_{2}\mr{d}x_{3}-\int_{x_{1}=0}\fnc{F}_{x_{1}}\mr{d}x_{2}\mr{d}x_{3},
\end{equation*}
where $\fxmIlinbarsc{1}$ is the vector that results from the entropy-conservative non-linear interpolation of the flux (this flux is replaced with $\fxmIlinbarssr{1}$ for the entropy-stable version), $\bm{1}$ is a vector of ones of appropriate size, and $\bm{F}_{x_1}$ is a vector of the entropy flux in the $x_1$ direction 
evaluated on the mesh nodes. 
}
\end{remark}

To obtain an equation for the entropy of the system, we follow the
entropy stability analysis presented in
\cite{carpenter_ssdc_2014,ss-no-slip-wall-bc-parsani-nasa-tm-2014}. Therefore,
multiplying the two discrete equations by
$\w^{\top}\Pmatvol$ and
$\left(\left[\Cij{m}{j}\right]\XiN{x_j}\right)^{\top}\Pmatvol$, respectively,
the expression for the time derivative of the entropy function  in each
element is
\begin{equation}\label{eq:estimate-no-slip-bc-1}
  \begin{aligned}
    \frac{d}{dt} & \mathbf{1}^{\top} \widehat{\Pmatvol} \, \bm{\fnc{S}} + \,  \mathbf{DT}
    \:+\: \mathbf{1}^{\top} \left(\Pxgentwodhat{2}{3} \Bxgenhat{1} \bm{F}_{x_1}
				+ \Pxgentwodhat{1}{3} \Bxgenhat{2} \bm{F}_{x_2}
				+ \Pxgentwodhat{1}{2} \Bxgenhat{3} \bm{F}_{x_3}  \right) \\
    & = \,\w^{\top} \left(\Pxgentwod{2}{3} \Bxone   \left[\Cij{1}{j}\right] \XiN{x_j}
                        + \Pxgentwod{1}{3} \Bxtwo   \left[\Cij{2}{j}\right] \XiN{x_j}
                        + \Pxgentwod{1}{2} \Bxthree \left[\Cij{3}{j}\right] \XiN{x_j} \right)  \\
    & + \,\w^{\top} \left(\Pxgentwod{2}{3} \left(\bm{{g}}_{x_1}^{(B),q} + \bm{{g}}_{x_1}^{(In),q}\right)
                        + \Pxgentwod{1}{3} \left(\bm{{g}}_{x_2}^{(B),q} + \bm{{g}}_{x_2}^{(In),q} \right)
			+ \Pxgentwod{1}{2} \left(\bm{{g}}_{x_3}^{(B),q} + \bm{{g}}_{x_3}^{(In),q} \right)\right) \\
    & + \left(\left[\Cij{1}{j}\right]\XiN{x_j}\right)^{\top} \Pxgentwod{2}{3} \left(\bm{{g}}_{x_1}^{(B),\Theta} + \bm{{g}}_{x_1}^{(In),\Theta}\right)
      + \left(\left[\Cij{2}{j}\right]\XiN{x_j}\right)^{\top} \Pxgentwod{1}{3} \left(\bm{{g}}_{x_2}^{(B),\Theta} + \bm{{g}}_{x_2}^{(In),\Theta}\right) \\
    & + \left(\left[\Cij{3}{j}\right]\XiN{x_j}\right)^{\top} \Pxgentwod{1}{2} \left(\bm{{g}}_{x_3}^{(B),\Theta} + \bm{{g}}_{x_3}^{(In),\Theta}\right),
\end{aligned}
\end{equation}
where $$\mathbf{DT} = \left\|\sqrt{[\Cij{m}{j}]} \, \XiN{j} \right\|^2_{\Pmatvol}\dadd{,}$$
is a positive quadratic term in the approximation of the first
derivative of the solution \cite{parsani_entropy_stability_solid_wall_2015}, $\mathbf{1}$ is the unit vector of appropriate size, and
\dadd{$\bm{F}_{x_m}$} the vector of the entropy flux in the $m$-th direction. 

Equation \eqref{eq:estimate-no-slip-bc-1} can be conveniently rewritten as 
\begin{equation}\label{eq:estimate-no-slip-bc-2}
    \frac{d}{dt} \mathbf{1}^{\top} \widehat{\Pmatvol} \, \bm{\fnc{S}} + \,  \mathbf{DT}
	= \mathbf{\Xi}. 
\end{equation}
This compact notation will \dadd{be} useful in Section \ref{sec:numerical_results}.

As \dadd{in the continuous analysis}, we assume that we have a hexahedral element with edge length
equal to one, and consider only the plane $(0,x_2,x_3)$ as wall boundary face. Thus, we have 
$\bm{{g}}_{x_m}^{(In),q}=\bm{{g}}_{x_m}^{(In),\Theta}=0, ~\forall m$. We also assume that all the 
points that lie on the other faces of the cube are treated in an entropy stable fashion such that 
their contribution can be neglected. \dadd{With these assumptions,} equation \eqref{eq:estimate-no-slip-bc-1} reduces to
\dadd{
\begin{equation}\label{eq:estimate-no-slip-bc-1-face}
  \begin{aligned}
    \frac{d}{dt} \mathbf{1}^{\top} \widehat{\Pmatvol} \, \bm{\fnc{S}} & + \,
    \mathbf{DT}
    \:+\: \mathbf{1}^{\top} \Pxgentwodhat{2}{3} \widehat{\mat{B}}_{x_1}^{-} \bm{F}_{x_1}
     = \,\w^{\top} \Pxgentwod{2}{3} \mat{B}_{x_{1}}^{-} \left[\Cij{1}{j}\right] \XiN{x_j} \\
     & + \,\w^{\top} \Pxgentwod{2}{3} \bm{{g}}_{x_1}^{(B),q}
       + \left(\left[\Cij{1}{j}\right]\XiN{x_j}\right)^{\top} \Pxgentwod{2}{3} \bm{{g}}_{x_1}^{(B),\Theta}.
\end{aligned}
\end{equation}
}
The penalty source term $\bm{{g}}_{x_1}^{(B),q}$ is composed of
three design-order terms plus a source boundary term
\begin{equation}\label{eq:SAT-variables}
  \begin{aligned}
    \bm{{g}}_{x_1}^{(B),q} =  \bm{{g}}_{x_1}^{(B,I),q} + \bm{{g}}_{x_1}^{(B,V),q} + \ensuremath{\bm{\fnc{M}}}^{(B,V)} + \ensuremath{\bm{\fnc{L}}^{(B,V)}},
\end{aligned}
\end{equation}
where
\dadd{
\begin{equation}\label{eq:g_BI_q}
\bm{{g}}_{x_1}^{(B,I),q} = -\mat{B}_{x_{1}}^{-} \left(\bm{f}_{x_1} - \bm{f}^{sc}\left(\ensuremath{\bm{v}},\bm{v}^{(B,I)} \right)\right)\dadd{,}
\end{equation}
}
\dadd{$\bm{f}^{sc}\left(\ensuremath{\bm{v}},\bm{v}^{(B,I)} \right)$ is the entropy conservative flux vector, which 
is a function of the two arguments $\bm{v}$ and $\bm{v}^{(B,I)}$,} and
\dadd{
\begin{equation}\label{eq:g_BV_q}
\bm{{g}}_{x_1}^{(B,V),q} = \frac{1}{2}\mat{B}_{x_{1}}^{-} \left(\left[\Cij{1}{j}\right]\XiN{x_j} - \ensuremath{\bm{f}_{x_{1}}^{(B,V)}} \right).
\end{equation}
}
For clarity of presentation, the expressions of $\ensuremath{\bm{\fnc{M}}}^{(B,V)}$, which is used to add provably dissipation,
and of $\ensuremath{\bm{\fnc{L}}^{(B,V)}}$ will be given and analyzed at the end of the Section.

The penalty $\bm{{g}}_{x_1}^{(B),\Theta}$ \reftwo{contains} a single design-order term
\dadd{
\begin{equation}\label{eq:g_BV_grad}
  \begin{aligned}
    \bm{{g}}_{x_1}^{(B),\Theta} 
    = \frac{1}{2}\mat{B}_{x_{1}}^{-}\left(\w-\w^{(B,V)}\right).
\end{aligned}
\end{equation}
}
In each of the contributions, the first component (the numerical state)
is constructed from the numerical solution, while the second component (the boundary state)
is constructed from a combination of the numerical solution and
four independent components of physical boundary data.

In what follows, we analyze
each of these contributions \reftwo{from \eqref{eq:estimate-no-slip-bc-1-face} separately}. We further restrict our analysis to a single solution point lying on the wall boundary face. Therefore, we will no longer make use of the bold notation, which will be replaced by italics to denote the vector of \reftwo{five} components at the collocated point.

The term given in \eqref{eq:g_BI_q} 
enforces the Euler no-penetration wall
condition through the inviscid flux of the compressible Euler equations.
The boundary state is formed by constructing an entropy conservative flux based on the
numerical state in primitive variables at the face point\dadd{,} $v$\dadd{,}
and a manufactured boundary state given by the
vector of the primitive variables
\begin{equation}\label{eq:definition-gIB}
  v^{(B,I)} =
\textrm{diag}([1,-1,1,1,1]), \quad
v  =
\left(\rho,
  -{\Uone},
  {\Utwo},
  {\Uthree},
  {\fnc{T}} \right)^{\top}.
\end{equation}

The term defined in \eqref{eq:g_BV_q}, 
together with the penalty \eqref{eq:g_BV_grad}, 
allow a weak imposition of the no-slip condition
\[
\fnc{U}_m=\fnc{U}^{wall}_m, \quad \fnc{U}^{wall}_1=0,\quad\dadd{m=2,3,}
\]
in an entropy conservative way, where
$\fnc{U}^{wall}_m$ is the $m$-th component of the wall velocity.
Furthermore, entropy flow is enforced 
if $\kappa\left(\frac{\partial \fnc{T}}{\partial x_1} \frac{1}{\fnc{T}}\right) \neq 0$.

Using the
following primitive variables
\begin{equation}\label{eq:definition-V-for-wB}
\begin{aligned}
  v^{(B,V)} =
\left(\rho,
  - \ensuremath{\fnc{U}_{1}},
  - \ensuremath{\fnc{U}_{2}} + 2 \, \ensuremath{\fnc{U}_{2}^{wall}},
  - \ensuremath{\fnc{U}_{3}} + 2 \, \ensuremath{\fnc{U}_{3}^{wall}},
  \fnc{T} \right)^{\top},
\end{aligned}
\end{equation}
we define the point-wise boundary viscous flux $\ensuremath{{\fnc{F}}}_{x_{1}}^{(B,V)}$,
the term $\ensuremath{{\fnc{M}}}^{(B,V)}$,
and source term $\ensuremath{{\fnc{L}}}^{(B,V)}$  as
\begin{equation}
\ensuremath{{\fnc{F}}}_{x_{1}}^{(B,V)} = \Cij{1}{j}^{(B,V)} \, \XiNnobold{x_j}^{(B,V)},
\end{equation}
\begin{equation}\label{eq:dissipation_def}
  \ensuremath{{\fnc{M}}}^{(B,V)} = L \left(w-w^{(B,V)}\right),
\end{equation}
\begin{equation}
\ensuremath{{\fnc{L}}}^{(B,V)} = - (0,0,0,0,1)^{\top} \, \fnc{T} \mathtt{g}(t),
\end{equation}
where $\mathtt{g}(t) = \kappa \frac{\partial \fnc{T}}{\partial x_1} \frac{1}{\fnc{T}}$ is a given $L^2$
function. \refone{Thus, in case of a boundary condition with imposed non-zero heat flux, $\mathtt{g}(t)$ is non-zero.}  We remark that the vector  $v^{(B,V)}$ is used to evaluate the matrix of the viscous coefficients $\Cij{1}{j}^{(B,V)}$, as well as to compute the penalty term in \eqref{eq:g_BV_grad}.

The manufactured gradient of the entropy variables at the boundary $\Theta_{x_j}^{(B,V)}$
is constructed using the following procedure:
\begin{itemize}\label{item:proecdure_grad}
\item Rotate the gradient of the entropy variables $\Theta_{x_j}$ to
  the gradient of the primitive variables, $v$, 
    \begin{equation}
    \Pi_{x_j} = \frac{\partial{V}}{\partial{W}} \Theta_{x_j}, \quad j = 1,2,3,
  \end{equation}
    where $\frac{\partial{V}}{\partial{W}}$ is the Jacobian of the transformation  in primitive variables
    with respect to the entropy variables evaluated at the face point.
\item Construct the gradient of the primitive variables at the wall boundary point 
  \begin{equation}
  \begin{aligned}
    \Pi^{(B,V)}_{x_j} = \textrm{diag}([-1,1,1,1,-1]) \, \Pi_{x_j}, \quad j=1,2,3.
  \end{aligned}
  \end{equation}
  \refone{This choice \dadd{follows} the imposition of Neumann
    boundary conditions in the context of the nodal discontinuous Galerkin method
    \cite{hesthaven_2008_nodal_dg}.}
\item Rotate the gradient of the primitive variables $\Pi_{x_j}^{(B,V)}$ to the gradient
  of the entropy variables 
  \begin{equation}
    \Theta^{(B,V)}_{x_j} = \frac{\partial{W}}{\partial{V}}\bigg\rvert_{(B)} \Pi^{(B,V)}_{x_j}, \quad j = 1,2,3,
  \end{equation}
    where $\frac{\partial{W}}{\partial{V}}\big\rvert_{(B)}$ is the Jacobian of the entropy variables
    with respect to the primitive variables evaluated using the state $v^{(B,V)}$ defined in \eqref{eq:definition-V-for-wB}.
\end{itemize}
Finally, the entropy variables $w^{(B,V)}$ needed in \eqref{eq:g_BV_grad} are computed from the primitive variables
defined in expression \eqref{eq:definition-V-for-wB} by using the relations
in \eqref{eq:entropy_variables}.

The matrix $L$ in \eqref{eq:dissipation_def} is a negative semi-definite
5x5 matrix which is defined as
\begin{equation}\label{eq:def_L_mat_diss}
  L = -\beta \frac{\Cij{1}{1} + \Cij{1}{1}^{(B,V)}}{2},
\end{equation}
where $\Cij{1}{1}$ and $\Cij{1}{1}^{(B,V)}$ are the positive
semi-definite viscous coefficient matrices
in the normal direction ($m=j=1$), respectively evaluated using the states $v$ and $v^{(B,V)}$,
and $\beta$ is a positive coefficient that modulates
the strength of the penalty term. This coefficient has to scale as the inverse of the
typical \dadd{element} length\footnote{\reftwo{This \dadd{is} done such that the scaling remains dimensionally consistent.}}.

Summarizing, the penalty at the face point for the conservative variables $\q$ is the sum
of two terms:
\begin{itemize}
  \item the difference between inviscid and entropy conservative fluxes
     in the normal direction,
  \item the difference between internal viscous and boundary
    viscous fluxes in the normal direction.
\end{itemize}
The penalty on the gradient of the entropy variables $\bm{\Theta}$ is instead given by the difference between the solution at the node
and the data imposed at the boundary expressed in terms of entropy variables.

The entropy conservation and stability of the penalty source terms \eqref{eq:SAT-variables}
and \eqref{eq:g_BV_grad} is demonstrated in the following
three theorems. The first theorem, which ensures entropy conservation for the inviscid
SAT penalty in \eqref{eq:SAT-variables}, and which enforces the no penetration condition,
is Theorem 5.1 in \cite{parsani_entropy_stability_solid_wall_2015}. The second\dadd{,} 
is a new theorem and it ensures entropy conservation or stability for the viscous SAT penalty.
The third theorem is\dadd{ also} new and ensures that the term $\bm{\fnc{M}}^{(B,V)}$ is a \reftwo{dissipative entropy
contribution.}

\begin{theorem}\label{th:euler-flipping-sign-1}
The penalty inviscid flux contribution in equation
\eqref{eq:SAT-variables} is entropy conservative if the
vector $v^{(B,I)}$ is defined as in \eqref{eq:definition-gIB}.
\end{theorem}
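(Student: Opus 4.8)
The plan is to show that, at a single solution point on the wall face $x_1=0$, the inviscid penalty $\bm{{g}}_{x_1}^{(B,I),q}$ from \eqref{eq:g_BI_q} combines with the boundary entropy-flux term carried by the telescoping interior operator (Remark \ref{rm:ho_ss_flux-1}) to produce a consistent numerical entropy flux through the wall, and that this flux vanishes exactly, reproducing the continuous result $\fnc{F}_{x_1}=0$ of Theorem \ref{th:continous_inviscid_bound}. Following the reduction already made before \eqref{eq:estimate-no-slip-bc-1-face}, I would drop the bold notation and work with the italic five-vectors $w$, $v$, $f_{x_1}$, $f^{sc}$ at the collocated wall node, where $\mat{B}_{x_1}^-$ acts as the scalar one; the only inviscid quantities entering the entropy balance are then the node value of the entropy flux $\fnc{F}_{x_1}$ and the contraction $w^{\top}\bm{{g}}_{x_1}^{(B,I),q}$.

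First I would use the Godunov relation \eqref{eq:GodFpotential}, $\fnc{F}_{x_1}=w^{\top}f_{x_1}-\fnc{\varPsi}_{x_1}$, to express the telescoped entropy flux in terms of the physical flux and the potential flux. Substituting the penalty \eqref{eq:g_BI_q}, whose contraction is $-w^{\top}\left(f_{x_1}-f^{sc}(v,v^{(B,I)})\right)$, the physical-flux term $w^{\top}f_{x_1}$ cancels, leaving a combination built only from $f^{sc}$ and $\fnc{\varPsi}_{x_1}$. The defining entropy-conservation (Tadmor) property of the two-point flux, $\left(w-w^{(B,I)}\right)^{\top}f^{sc}(v,v^{(B,I)})=\fnc{\varPsi}_{x_1}(v)-\fnc{\varPsi}_{x_1}(v^{(B,I)})$ \cite{tadmor_review_entropy_analysis_2003}, is then used to symmetrize this combination into the consistent numerical entropy flux $\widetilde{\fnc{F}}=\frac{1}{2}\left(w+w^{(B,I)}\right)^{\top}f^{sc}(v,v^{(B,I)})-\frac{1}{2}\left(\fnc{\varPsi}_{x_1}(v)+\fnc{\varPsi}_{x_1}(v^{(B,I)})\right)$, which is consistent because it collapses to $\fnc{F}_{x_1}$ when $v^{(B,I)}=v$.

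The decisive step is to evaluate $\widetilde{\fnc{F}}$ with the reflected state $v^{(B,I)}=\textrm{diag}([1,-1,1,1,1])\,v$ of \eqref{eq:definition-gIB}, which flips only the wall-normal velocity $\fnc{U}_1$. For any standard entropy-conservative flux the mass, tangential-momentum, and energy components of $f^{sc}(v,v^{(B,I)})$ all carry a factor of the averaged normal velocity $\frac{1}{2}\left(\fnc{U}_1+(-\fnc{U}_1)\right)=0$ and hence vanish, so only the normal-momentum (pressure) component survives. Simultaneously, the corresponding entropy variable $w_2=\fnc{U}_1/\fnc{T}$ from \eqref{eq:entropy_variables} is odd under the reflection, so the averaged vector $\frac{1}{2}\left(w+w^{(B,I)}\right)$ has a vanishing normal-momentum entry and therefore annihilates the surviving pressure component, giving $\frac{1}{2}\left(w+w^{(B,I)}\right)^{\top}f^{sc}=0$. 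Finally, since $\fnc{\varPsi}_{x_1}=-\rho\,\fnc{U}_1 R$ is itself odd in $\fnc{U}_1$ (from the proof of Theorem \ref{th:continous_inviscid_bound}), the two potential fluxes cancel in the average, so $\widetilde{\fnc{F}}=0$. This establishes that the inviscid penalty contributes no net entropy, i.e.\ it is entropy conservative.

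The step I expect to be the main obstacle is not the symmetry argument of the previous paragraph but the exact bookkeeping that shows the telescoped interior flux plus the SAT contraction equal precisely the symmetric two-point flux $\widetilde{\fnc{F}}$. This requires tracking the sign induced by the outward normal $\bm n=(-1,0,0)^{\top}$ and by the splitting $\mat{B}_{x_1}=\mat{B}_{x_1}^{+}-\mat{B}_{x_1}^{-}$ consistently between the flux term in \eqref{eq:estimate-no-slip-bc-1-face} and the penalty \eqref{eq:g_BI_q}, and it relies on the explicit averaged form of the entropy-conservative flux to justify the component-wise vanishing used above. Once these conventions are fixed, the cancellations close the argument and yield the claimed entropy conservation.
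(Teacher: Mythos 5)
Your argument is correct, but note what you are being compared against: the paper does not prove this theorem inline at all --- its ``proof'' is a citation to Theorem 5.1 of \cite{parsani_entropy_stability_solid_wall_2015}, where the result is obtained by direct computation with the specific entropy-conservative two-point flux used in that work. Your proof reconstructs the same mechanism (the mirror state of \eqref{eq:definition-gIB} leaves only the normal-momentum/pressure component of $f^{sc}$, the averaged entropy variables annihilate that component, and $\fnc{\varPsi}_{x_1}$ is odd in $\fnc{U}_1$), but packages it flux-agnostically: Godunov's relation \eqref{eq:GodFpotential} cancels $w^{\top}f_{x_1}$ against the telescoped entropy flux, the Tadmor shuffle condition converts the remaining wall production $w^{\top}f^{sc}-\fnc{\varPsi}_{x_1}(v)$ into the symmetrized numerical entropy flux $\widetilde{\fnc{F}}$, and symmetry kills $\widetilde{\fnc{F}}$. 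That chain is sound; I verified that the production at the wall node is exactly $w^{\top}f^{sc}(v,v^{(B,I)})-\fnc{\varPsi}_{x_1}(v)$ and that this equals your $\widetilde{\fnc{F}}$ once the Tadmor condition is invoked. Your sign worry is also benign: the paper's own \eqref{eq:estimate-no-slip-bc-1-face} is loose with the sign carried by $\mat{B}_{x_1}=\mat{B}_{x_1}^{+}-\mat{B}_{x_1}^{-}$ at the $x_1=0$ face, but the cancellation is insensitive to this provided the same convention is applied to the telescoped flux term and to the penalty.

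The one point you should state as an explicit hypothesis rather than a plausibility remark is the component-wise vanishing. The Tadmor condition $\left(w-w^{(B,I)}\right)^{\top}f^{sc}=\fnc{\varPsi}_{x_1}(v)-\fnc{\varPsi}_{x_1}(v^{(B,I)})$ alone does \emph{not} imply that the mass, tangential-momentum, and energy entries of $f^{sc}(v,v^{(B,I)})$ vanish: since $w-w^{(B,I)}$ has only a normal-momentum entry, that condition constrains only $f^{sc}_2$. What is actually needed is reflection equivariance of the two-point flux, $f^{sc}(\mat{R}v_R,\mat{R}v_L)=-\mat{R}\,f^{sc}(v_L,v_R)$ with $\mat{R}=\textrm{diag}([1,-1,1,1,1])$, which applied to the pair $\left(v,\mat{R}v\right)$ gives $\left(\mat{I}+\mat{R}\right)f^{sc}(v,v^{(B,I)})=0$, i.e.\ precisely the vanishing of components $1,3,4,5$. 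All standard entropy-conservative fluxes (including the Ismail--Roe-type flux underlying the cited Theorem 5.1) satisfy this, so your proof is complete once this property is assumed; without it, ``entropy conservative'' in the sense of the Tadmor condition would not by itself force $\widetilde{\fnc{F}}=0$. In exchange for this extra hypothesis, your argument covers any such flux, whereas the cited proof is tied to one concrete flux formula.
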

\begin{proof}
See Theorem 5.1 in \cite{parsani_entropy_stability_solid_wall_2015}.
\end{proof}

\begin{theorem}\label{th:ns-viscous-sat}
The penalty terms for the viscous flux on the conserved variables \eqref{eq:SAT-variables},
together with the viscous penalty on the gradient of the entropy variables \eqref{eq:g_BV_grad},
  are
  \begin{itemize}
  \item entropy conservative if the wall is adiabatic,  i.e. $\mathtt{g}(t) =0$,
  \item entropy stable  in the presence of a heat flux, i.e. $\mathtt{g}(t) \neq 0$, where $\mathtt{g}(t)$ is a given $L^2$ function.
  \end{itemize}
\end{theorem}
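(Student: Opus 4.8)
The plan is to collapse the element-wise balance \eqref{eq:estimate-no-slip-bc-1-face} to a single wall node and to show that the design-order viscous penalties reproduce, at the discrete level, the viscous boundary term of the continuous estimate \eqref{eq:continuous_entropy_estimate_1wall}. First I would set aside the inviscid pieces: the entropy-flux contribution $\mathbf{1}^{\top}\Pxgentwodhat{2}{3}\widehat{\mat{B}}_{x_1}^{-}\bm{F}_{x_1}$ together with $\bm{{g}}_{x_1}^{(B,I),q}$ from \eqref{eq:g_BI_q} is entropy conservative by Theorem~\ref{th:euler-flipping-sign-1}, so it contributes nothing to the viscous balance. The term $\bm{\fnc{M}}^{(B,V)}$ appearing in \eqref{eq:SAT-variables} supplies the controllable dissipation of \eqref{eq:dissipation_def} and is deferred to the subsequent theorem; I therefore analyze only the viscous flux penalty $\bm{{g}}_{x_1}^{(B,V),q}$ of \eqref{eq:g_BV_q}, the gradient penalty $\bm{{g}}_{x_1}^{(B),\Theta}$ of \eqref{eq:g_BV_grad}, and the source $\bm{\fnc{L}}^{(B,V)}$.

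Restricting to a single solution point on the wall face and factoring out the strictly positive diagonal surface weight carried by $\Pxgentwod{2}{3}$, the viscous contribution reduces, in the single-point italic notation, to $w^{\top}\Phi + w^{\top}\bm{{g}}_{x_1}^{(B,V),q} + (\Cij{1}{j}\XiNnobold{x_j})^{\top}\bm{{g}}_{x_1}^{(B),\Theta} + w^{\top}\fnc{L}^{(B,V)}$, where I write $\Phi = \Cij{1}{j}\XiNnobold{x_j}$ for the interior viscous flux and $\Phi^{(B,V)} = \Cij{1}{j}^{(B,V)}\XiNnobold{x_j}^{(B,V)}$ for the manufactured boundary flux. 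Substituting the factor-$\tfrac{1}{2}$ definitions \eqref{eq:g_BV_q} and \eqref{eq:g_BV_grad} and using the scalar symmetry $w^{\top}\Phi = \Phi^{\top}w$, the physical flux $w^{\top}\Phi$ (with its orientation sign carried by $\mat{B}_{x_1}^{-}$) is cancelled by the two symmetric half-terms, leaving the symmetrized cross term $-\tfrac{1}{2}\left(w^{\top}\Phi^{(B,V)} + (w^{(B,V)})^{\top}\Phi\right) + w^{\top}\fnc{L}^{(B,V)}$.

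Next I would evaluate this cross term using the reflected state $v^{(B,V)}$ of \eqref{eq:definition-V-for-wB} and the rotate--flip--rotate construction of $\XiNnobold{x_j}^{(B,V)}$. The goal is to show that the momentum and viscous-work contributions, which involve the reflected velocity components, enter the two products with opposite signs and cancel, so that only the heat-conduction part survives; this mirrors the continuous identity underlying Theorem~\ref{th:continous_viscous_bound}, namely that $w^{\top}\Cij{1}{j}\partial_{x_j}w$ collapses to $\kappa(\partial_{x_1}\fnc{T})/\fnc{T}$ once no-slip is enforced. The source is then calibrated against exactly this residual: since $w^{\top}\fnc{L}^{(B,V)} = \mathtt{g}$ by \eqref{eq:entropy_variables}, the surviving heat terms combine into a single expression controlled by $\mathtt{g}$. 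For an adiabatic wall, $\mathtt{g}=0$ makes the entire viscous boundary contribution vanish node by node, which is the claimed entropy conservation; for a prescribed nonzero heat entropy flow the contribution reduces to the bounded data term, reproducing the bounded viscous boundary contribution of \eqref{eq:continuous_entropy_estimate_1wall} and giving entropy stability. Summing the point-wise identities against the positive weights in $\Pxgentwod{2}{3}$ then delivers the element-wise statement.

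The hard part will be the velocity cancellation in $w^{\top}\Phi^{(B,V)} + (w^{(B,V)})^{\top}\Phi$. This requires propagating the map $\XiNnobold{x_j} \mapsto \XiNnobold{x_j}^{(B,V)}$ through the Jacobians $\partial V/\partial W$ and $\partial W/\partial V|_{(B)}$ and the sign pattern $\diag([-1,1,1,1,-1])$, and checking against the explicit entries of the symmetric, semi-definite $\Cij{1}{j}$ blocks that the manufactured flux mirrors the interior flux so that every velocity-gradient term cancels and only heat conduction remains. Because this is a finite but intricate linear-algebra verification, in practice I would confirm it symbolically (as the supplementary script does for the general curvilinear case) rather than expand all entries by hand.
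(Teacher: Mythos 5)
Your proposal is correct and follows essentially the same route as the paper: the paper's own proof likewise substitutes the penalties \eqref{eq:SAT-variables} (with $\bm{\fnc{M}}^{(B,V)}=0$) and \eqref{eq:g_BV_grad} into the face-restricted balance \eqref{eq:estimate-no-slip-bc-1-face}, reads off $\tfrac{d}{dt}\,\mathbf{1}^{\top}\widehat{\Pmatvol}\,\bm{\fnc{S}}+\mathbf{DT}=\mathbf{1}^{\top}\Pxgentwodhat{2}{3}\,\mathtt{g}(t)$, and defers the pointwise algebra to the symbolic verification in \ref{app:python_proofs}, exactly as you propose. One step of your narrative should be tightened, however, because as literally stated it would fail. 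Writing $\Phi=\Cij{1}{j}\XiNnobold{x_j}$ and $\Phi^{(B,V)}=\Cij{1}{j}^{(B,V)}\XiNnobold{x_j}^{(B,V)}$, the symmetrized cross term $-\tfrac12\bigl(w^{\top}\Phi^{(B,V)}+(w^{(B,V)})^{\top}\Phi\bigr)$ vanishes \emph{identically}, not only in its velocity part: the viscous-work contributions cancel pairwise because the velocities in $v^{(B,V)}$ of \eqref{eq:definition-V-for-wB} are reflected while $\tau^{(B,V)}_{i1}=\tau_{i1}$ (the velocity-gradient components of $\Pi^{(B,V)}_{x_j}$ are unchanged and $T^{(B,V)}=T$), and the heat-conduction contributions cancel because the temperature-gradient component of $\Pi^{(B,V)}_{x_j}$ carries the flipped sign from $\diag([-1,1,1,1,-1])$, so that $w_5\,(+\kappa\,\partial_{x_1}\fnc{T})$ and $w^{(B,V)}_5(-\kappa\,\partial_{x_1}\fnc{T})$ annihilate each other. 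Consequently nothing solution-dependent ``survives'' to be combined with the source; the entire viscous boundary contribution is exactly $w^{\top}\fnc{L}^{(B,V)}=\mathtt{g}(t)$. Under your stated mechanism --- velocity terms cancel, ``only the heat-conduction part survives'' and is then ``controlled by $\mathtt{g}$'' --- the adiabatic case $\mathtt{g}=0$ would retain a solution-dependent residue $\propto\kappa\,\partial_{x_1}\fnc{T}/\fnc{T}$ and conservation would not follow; the sign flip of the temperature gradient in the rotate--flip--rotate construction is precisely the ingredient that removes this residue, and it is what your symbolic check would in fact reveal.
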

\begin{proof}
By substituting \dadd{into} \eqref{eq:estimate-no-slip-bc-1-face} the expressions
  for $\bm{{g}}_{x_1}^{(B),q}$ with $\ensuremath{\bm{\fnc{M}}}^{(B,V)}=0$ (i.e. no dissipation) and $\bm{{g}}_{x_1}^{(B),\Theta}$ given in
\eqref{eq:SAT-variables} and \eqref{eq:g_BV_grad}, respectively, yields
\dadd{
  \[
    \frac{d}{dt} \mathbf{1}^{\top} \widehat{\Pmatvol} \, \bm{\fnc{S}} +  \mathbf{DT} =\mathbf{1}^{\top} \Pxgentwodhat{2}{3} \, \mathtt{g}(t).
  \]
}
For an adiabatic wall $\mathtt{g}(t) = 0$ and therefore the proposed boundary
conditions are entropy conservative. For $\mathtt{g}(t) \neq 0$ the boundary
conditions are entropy stable because the contribution to the time rate of change
of the entropy function is only a function of the data $\mathtt{g}(t)$.
\end{proof}

\begin{theorem}\label{th:ns-viscous-ip-dissipation}
The interior penalty term
  \begin{equation}
    \bm{\fnc{M}}^{(B,V)} = [L] \left(\w-\w^{(B,V)}\right)
  \end{equation}
added to the SAT \eqref{eq:SAT-variables} is entropy dissipative.
\end{theorem}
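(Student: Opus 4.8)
The plan is to isolate the entropy contribution of the added term $\bm{\fnc{M}}^{(B,V)}$ and show it is sign-definite. Since Theorem~\ref{th:ns-viscous-sat} already establishes entropy conservation/stability for the penalties \eqref{eq:SAT-variables} and \eqref{eq:g_BV_grad} with $\bm{\fnc{M}}^{(B,V)}=0$, reinstating the term simply adds $\w^{\top}\Pxgentwod{2}{3}\bm{\fnc{M}}^{(B,V)}$ to the right-hand side of \eqref{eq:estimate-no-slip-bc-1-face}. Because $\Pxgentwod{2}{3}$ is a positive diagonal (quadrature) matrix and the other faces and interfaces were neglected in the setup, it suffices to work at a single wall node and prove $w^{\top}L\left(w-w^{(B,V)}\right)\le 0$, where $L=-\tfrac{\beta}{2}\left(\Cij{1}{1}+\Cij{1}{1}^{(B,V)}\right)$ is symmetric negative semi-definite and $\beta>0$. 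First I would record the two structural facts about the wall-normal viscous coefficient matrix that drive the argument: $\Cij{1}{1}$ is symmetric positive semi-definite with a vanishing first (continuity) row and column, and its velocity/temperature blocks depend on the state only through $\mu,\fnc{T}$ and the velocity $\bm{\fnc{U}}$.

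Next I would evaluate the jump $w-w^{(B,V)}$ in entropy variables. Using \eqref{eq:entropy_variables} together with the boundary state \eqref{eq:definition-V-for-wB}, the density and temperature coincide for $v$ and $v^{(B,V)}$, so the temperature component of the jump vanishes, $w_5-w_5^{(B,V)}=0$, while each momentum component reduces to $2\left(\fnc{U}_m-\fnc{U}_m^{wall}\right)/\fnc{T}$; the continuity component of the jump is annihilated by the zero row/column of $L$ and never enters. Writing $\Cij{1}{1}$ in block form acting on $\left(\bm{\fnc{U}}/\fnc{T},-1/\fnc{T}\right)$, with velocity block $M=M(\mu,\fnc{T})$ and energy--momentum coupling block equal to $M\bm{\fnc{U}}$ (a consequence of the energy viscous flux being $\bm{\fnc{U}}\cdot\bm{\tau}-\kappa\nabla\fnc{T}$), I would use that $M$ is identical for the two states and that the averaged coupling block is $M\left(\bm{\fnc{U}}+\bm{\fnc{U}}^{(B,V)}\right)/2=M\,\bm{\fnc{U}}^{wall}$, since $v^{(B,V)}$ is precisely the reflection of the velocity about the wall value.

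Carrying out the contraction then collapses the cross terms: the momentum--momentum contribution and the coupling contribution (the temperature-temperature entry drops out because the temperature jump is zero) combine, through $\bm{\fnc{U}}+\bm{\fnc{U}}^{(B,V)}=2\bm{\fnc{U}}^{wall}$ and $\bm{\fnc{U}}-\bm{\fnc{U}}^{(B,V)}=2\left(\bm{\fnc{U}}-\bm{\fnc{U}}^{wall}\right)$, into a single quadratic form,
\begin{equation*}
  w^{\top}L\left(w-w^{(B,V)}\right)
  = -\frac{2\beta}{\fnc{T}^{2}}\left(\bm{\fnc{U}}-\bm{\fnc{U}}^{wall}\right)^{\top} M \left(\bm{\fnc{U}}-\bm{\fnc{U}}^{wall}\right)\le 0,
\end{equation*}
because $M\succeq 0$ and $\fnc{T},\beta>0$. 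This both proves that $\bm{\fnc{M}}^{(B,V)}$ is entropy dissipative and exhibits it as a controllable damping of the no-slip mismatch $\bm{\fnc{U}}-\bm{\fnc{U}}^{wall}$, with strength tuned by $\beta$. The main obstacle is the algebra of the two preceding steps: one must pin down the exact block structure of $\Cij{1}{1}$ in entropy variables---in particular the identity that its energy--momentum coupling block equals $M\bm{\fnc{U}}$, and that $M$ is state-independent once $\mu,\fnc{T}$ are fixed---so that the cross terms assemble into a perfect square rather than an indefinite remainder. I would verify these matrix identities symbolically (as in the supplementary script of \ref{app:python_proofs}) before trusting the final cancellation.
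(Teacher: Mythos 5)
Your proposal is correct and follows essentially the same route as the paper's proof: both isolate the pointwise contribution $w^{\top}L\left(w-w^{(B,V)}\right)$ (the quadrature weight from $\Pxgentwod{2}{3}$ being positive) and show it is a nonpositive quadratic form in the slip velocity $\bm{\fnc{U}}-\bm{\fnc{U}}^{wall}$ by exploiting the reflection structure of $v^{(B,V)}$. The paper carries out the final algebra by direct substitution, arriving at the explicit expression $-\frac{2\beta\mu}{3\fnc{T}}\bigl(4\,\fnc{U}_1^2+3\left(\fnc{U}_2-\fnc{U}_2^{wall}\right)^2+3\left(\fnc{U}_3-\fnc{U}_3^{wall}\right)^2\bigr)$, which is precisely your abstract form $-\frac{2\beta}{\fnc{T}^2}\left(\bm{\fnc{U}}-\bm{\fnc{U}}^{wall}\right)^{\top}M\left(\bm{\fnc{U}}-\bm{\fnc{U}}^{wall}\right)$ evaluated with $M=\mu\fnc{T}\,\diag(4/3,1,1)$.
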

\begin{proof}
  By expanding the contraction $\w^{\top} \Pxgentwod{2}{3} \bm{{g}}_{x_1}^{(B),q}$
  in \eqref{eq:estimate-no-slip-bc-1-face}, and by focusing on the dissipation term only,
  we arrive at the following point-wise contribution to the time-rate of change
  of the entropy function
  \begin{equation}\label{eq:IP-dissipation-proof-1}
      w^{\top}  L \left(w-w^{(B,V)}\right).
  \end{equation}
  Note that we have omitted an extra positive scaling factor corresponding to the entry of the matrix $\Pxgentwod{2}{3}$ associated with a wall boundary point.
  Plugging in the definitions of the matrix $L$ \eqref{eq:def_L_mat_diss}, $w$ and $w^{(B,V)}$
  we obtain
  \[
    w^{\top}  L \left(w-w^{(B,V)}\right) = - \frac{2 \beta\mu}{3 \fnc{T}}\left(4 \, \fnc{U}_{1}^2 + 3 \left(\fnc{U}_{2}^2 - \fnc{U}_2^{wall}\right)^2 + 3 \left(\fnc{U}_{3}^2 - \fnc{U}_3^{wall}\right)^2\right),
  \]
  which completes the proof.
 \end{proof}

\begin{remark}\label{rm:dissipation}
  In order to construct an entropy stable solid wall boundary condition for the Euler
  equations, it is necessary to add \reftwo{provable entropy dissipation.} 
  One solution consists of replacing the entropy conservative flux $\bm{f}^{sc}$ in
  \eqref{eq:SAT-variables} with an entropy stable
  flux $\bm{f}^{ssr}$ in \eqref{eq:SAT-variables}
  as described in \cite{parsani_entropy_stability_solid_wall_2015}.
\end{remark}

\section{A common SAT procedure for the imposition of wall boundary conditions and interior interface coupling}
\dadd{
The proposed approach for imposing the solid wall
boundary conditions allows for a SAT implementation which is identical
to the interface treatment shown in \cite{parsani_entropy_stable_interfaces_2015}.
We can use a single subroutine with different inputs corresponding to the imposition of
the interior interface couplings, or of the adiabatic solid wall or of the wall with a prescribed heat entropy flow. In fact,
the interior interface coupling can be written as (see equations (16a-16d) in \cite{parsani_entropy_stable_interfaces_2015})
\begin{subequations}\label{eq:left-right-ldg-ip}
\begin{equation}\label{eq:left-ldg-ip-1}
\begin{aligned}
	\frac{\partial \q_l}{\partial t} + \sum_{m=1}^3\left({\Pxgeninv{m,l}} \, {\Delta_{x_m,l}} \, {\fxmIlinbar{m,l}} - \sum\limits_{j=1}^{3}\Dxgen{m,l} \left[\Cij{m}{j,l}\right] \XiN{x_j,l}\right) =
    \sum_{m=1}^3\Pxgeninv{m,l} \, \bm{{g}}_{x_m,l}^{(In),q},
\end{aligned}
\end{equation}
\begin{equation}\label{eq:left-ldg-ip-2}
  \begin{aligned}
    \XiN{x_m,l} - \Dxgen{m,l} \w = \Pxgeninv{m,l} \, \bm{{g}}_{x_m,l}^{(In),\Theta}, \quad m =1,2,3,
\end{aligned}
\end{equation}
\begin{equation}\label{eq:right-ldg-ip-1}
\begin{aligned}
	\frac{\partial \q_r}{\partial t} + \sum_{m=1}^3\left({\Pxgeninv{m,r}} \, {\Delta_{x_m,r}} \, {\fxmIlinbar{m,r}} - \sum\limits_{j=1}^{3}\Dxgen{m,r} \left[\Cij{m}{j,r}\right] \XiN{x_j,r}\right) =
    \sum_{m=1}^3\Pxgeninv{m,r} \, \bm{{g}}_{x_m,r}^{(In),q},
\end{aligned}
\end{equation}
\begin{equation}\label{eq:right-ldg-ip-2}
  \begin{aligned}
    \XiN{x_m,r} - \Dxgen{m,r} \w = \Pxgeninv{m,r} \, \bm{{g}}_{x_m,r}^{(In),\Theta}, \quad m =1,2,3,
\end{aligned}
\end{equation}
\end{subequations}
which have exactly the same structure as LDG-IP approach used for the imposition of the solid wall boundary conditions except for the boundary penalty interface terms, $\bm{{g}}_{x_m,r}^{(B),\cdot}$ in equation \eqref{eq:ldg-ip}, which are replaced by the interior penalty interface coupling terms, $\bm{{g}}_{x_m,r}^{(In),\cdot}$ in equations \eqref{eq:left-right-ldg-ip}.
}

\section{Numerical results}\label{sec:numerical_results}

In this section we present four three-dimensional test cases which demonstrate the robustness
of the new wall boundary conditions coupled with
the family of high-order accurate entropy-stable interior SBP-SAT algorithms developed in
\cite{carpenter_ssdc_2014,parsani_entropy_stability_solid_wall_2015,parsani_entropy_stable_interfaces_2015,carpenter_entropy_stability_ssdc_2016}.
The systems of ordinary differential equations arising from the spatial
discretizations are integrated using the fourth-order
accurate Dormand--Prince method \cite{dormand_rk_1980} endowed with an adaptive time stepping technique based on digital signal processing \cite{Soderlind2003,Soderlind2006}. \reftwo{We note that} small enough tolerances are always used to make the temporal error negligible.

The unstructured grid solver used herein has been developed at the Extreme Computing Research Center (ECRC) at KAUST on top of the Portable and Extensible Toolkit for Scientific computing (PETSc)~\cite{petsc-user-ref}, its mesh topology abstraction (DMPLEX)~\cite{KnepleyKarpeev09} and scalable ordinary differential equation (ODE)/differential algebraic equations (DAE) solver library~\cite{abhyankar2018petsc}, and the Message Passing Interface (MPI). \reftwo{Additionally,} the numerical solver is based on the algorithms proposed in \cite{carpenter_ssdc_2014,parsani_entropy_stability_solid_wall_2015,parsani_entropy_stable_interfaces_2015,carpenter_entropy_stability_ssdc_2016}.
It uses a transformation from computational to physical space that satisfies both the entropy conservation and the
geometric conservation law at the semi-discrete level \cite{fisher_phd_2012}.
Unless otherwise stated, the meshes used in this work have been generated using the GMSH package \cite{geuzaine2009gmsh}.

\dadd{We present the numerical results for five test cases:
\begin{itemize}
\item Laminar flow in a pipe with annular section to verify the accuracy of procedure for the imposition of the solid wall boundary conditions; 
\item Laminar flow in a lid-driven cavity to validate the entropy conservation and entropy stability properties of the interior discretization operator coupled with the boundary procedure;
\item Laminar flow past a three-dimensional cylinder and a sphere to demonstrate the engineering capabilities of the interior discretization operator coupled with the boundary procedure;
\item Supersonic turbulent flow past a three-dimensional rod with square section to demonstrate the robustness of the solver and solid wall boundary conditions (``standard'' SBP-SAT operators and procedure for imposing solid wall boundary conditions based on linear analysis crash).
\end{itemize}}

\dadd{
\subsection{Flow in pipe with annular cross-section}
In this section we investigate the accuracy of the solid wall boundary conditions. 
The proposed entropy stable no-slip wall boundary conditions do not force the numerical 
solution to exactly fulfill the boundary conditions. Instead the effect can be described as 
a rubber-band pulling the solution towards the boundary conditions. The computed boundary value
(or numerical state) typically deviates slightly from the prescribed value but the deviation is 
reduced as the grid is refined. To verify the accuracy of the new procedure, we perform a grid convergence
study for the flow in a full three-dimensional pipe with annular cross-section. For an incompressible flow, this test case has an analytical solution \cite{rosenhead_bl_book} for both the velocity distribution and the volume flux through the annular pipe. The expression for the axial velocity distribution, $\Uone$, as a function of the radial coordinate, $r$, is
\begin{equation}
	\ensuremath{\fnc{U}_{1}}(r) = \frac{G}{4\mu} \left[\left(R_{i}^2-r^2\right) + \left(R_{o}^2-R_{i}^2\right) \frac{ln(r/R_{i})}{ln(R_{o}/R_{i})} \right],  
\end{equation}
where $G$ is the pressure gradient forcing term and $R_{i}$ and $R_{o}$ are the inner and outer radii of the pipe, respectively. Herein, we set $R_{o} = 0.5$, $R_{o}/R_{i} = 4$ and $G/\mu=1$.
We highlight that we have chosen this test problem because i) it has an analytical solution which cannot be represented exactly by the polynomial space of the numerical solution\dadd{, and} ii) it requires \dadd{the use of} curved boundary element faces to capture accurately the geometry of the annular section of the pipe.

The code that is used is a compressible code and in order to obtain results that are very close to those found for the incompressible 
equations, a Mach number of $M = 1.0e-05$ is considered. Periodic boundary condition are used in the axial direction.

We run a grid convergence study for $p=2,3,4$ with a sequence of nested grids generated using rational Bezier basis functions
such that the geometrical description of the pipe is preserved exactly for each refinement level.

The error in the axial velocity profile are computed using discrete norms as follows:
\[
\begin{split}
&\text{Discrete }L^{1}: \|\bm{u}\|_{L^{1}}=\sum\limits_{\kappa=1}^{K}\onek\Tr\Mk\matJk\textrm{abs}\left(\uk\right),\\
&\text{Discrete }L^{2}: \|\bm{u}\|_{L^{2}}^2=\sum\limits_{\kappa=1}^{K}\uk\Tr\Mk\matJk\uk,\\
&\text{Discrete }L^{\infty}: \|\bm{u}\|_{L^{\infty}}=\max\limits_{\kappa=1\dots K}\textrm{abs}\left(\uk\right),
\end{split}
\]
where $\matJk$ is the metric Jacobian of the curvilinear transformation from physical
space to computational space of the $k$-th hexahedral element and $K$ is the
total number of non-overlapping hexahedral elements in the mesh.

The results of the grid convergence study are shown in Tables \ref{tab:an_p2}, \ref{tab:an_p3} and \ref{tab:an_p4} where the numbering the first column indicates the number of elements in the radial, angular and axial coordinates. It can be seen that the computed order of accuracy is very close to the formal value of $\sim (p+1)$.

\begin{table}[htbp]
\vspace{0.5cm}
\begin{center}
\begin{tabular}{||l||c|c|c|c|c|c||}
\hline \hline
 Grid & $L^1$    & Rate   & $L^2$     & Rate  & $L^{\infty}$ & Rate  \\ \hline
 4    & 8.77e-02 & -      & 1.79e-01  & -     & 5.69e-01     & -     \\ \hline
 8    & 1.28e-02 & -2.77  & 3.45e-02  & -2.38 & 1.50e-01     & -1.92 \\ \hline
 16   & 2.03e-03 & -2.66  & 5.47e-03  & -2.66 & 3.12e-02     & -2.27 \\ \hline
 32   & 2.68e-04 & -2.92  & 7.51e-04  & -2.86 & 5.07e-03     & -2.62 \\ \hline
 64   & 3.33e-05 & -3.01  & 9.66e-05  & -2.96 & 7.23e-04     & -2.81 \\ \hline
\end{tabular}
\caption{Convergence study for the flow in a pipe with annular section; $p=2$; error in the axial velocity.}.
\label{tab:an_p2}
\end{center}
\end{table}

\begin{table}[htbp]
\vspace{0.5cm}
\begin{center}
\begin{tabular}{||l||c|c|c|c|c|c||}
\hline \hline
 Grid & $L^1$    & Rate   & $L^2$     & Rate  & $L^{\infty}$ & Rate  \\ \hline
 4    & 2.04e-02 & -      & 3.87e-02  & -     & 1.54e-01     & -     \\ \hline
 8    & 2.71e-03 & -2.91  & 5.68e-03  & -2.77 & 3.17e-02     & -2.28 \\ \hline
 16   & 2.15e-04 & -3.65  & 5.69e-04  & -3.32 & 4.43e-03     & -2.84 \\ \hline
 32   & 1.25e-05 & -4.10  & 4.40e-05  & -3.69 & 4.63e-04     & -3.26 \\ \hline
 64   & 6.65e-07 & -4.23  & 2.87e-06  & -3.94 & 4.05e-05     & -3.51 \\ \hline
\end{tabular}
\caption{Convergence study for the flow in a pipe with annular section; $p=3$; error in the axial velocity.}.
\label{tab:an_p3}
\end{center}
\end{table}

\begin{table}[htbp]
\vspace{0.5cm}
\begin{center}
\begin{tabular}{||l||c|c|c|c|c|c||}
\hline \hline
 Grid & $L^1$    & Rate   & $L^2$     & Rate  & $L^{\infty}$ & Rate  \\ \hline
 4    & 4.54e-03 & -      & 9.55e-03  & -     & 4.70e-02     & -     \\ \hline
 8    & 4.48e-04 & -3.34  & 9.58e-04  & -3.32 & 6.31e-03     & -2.90 \\ \hline
 16   & 1.89e-05 & -4.57  & 5.68e-05  & -4.08 & 5.14e-04     & -3.66 \\ \hline
 32   & 5.98e-07 & -4.98  & 2.34e-06  & -4.60 & 2.76e-05     & -4.22 \\ \hline
 64   & 2.22e-08 & -4.75  & 7.65e-08  & -4.95 & 1.13e-06     & -4.61 \\ \hline
\end{tabular}
\caption{Convergence study for the flow in a pipe with annular section; $p=4$; error in the axial velocity.}.
\label{tab:an_p4}
\end{center}
\end{table}

}

\subsection{Lid-driven cavity}

\begin{figure}
  \centering
  \includegraphics[width=0.6\textwidth]{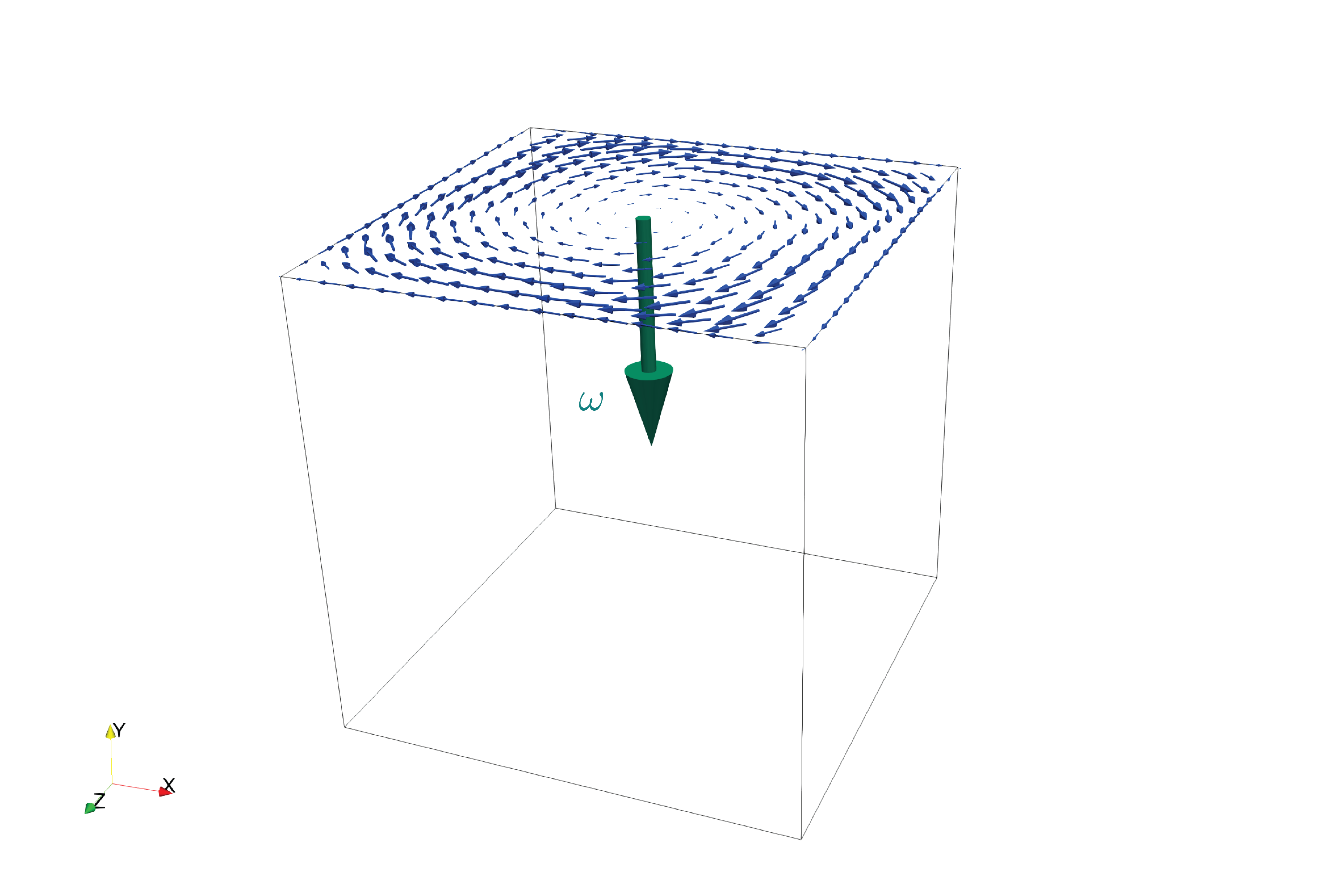}
  \caption{Driven cavity with rigid body rotation $\omega$ in one of its faces.}
  \label{fig:dc_bc_noheat}
\end{figure}

\begin{figure}
  \centering
  \begin{tabular}{c c}
  \includegraphics[width = 0.45\textwidth]{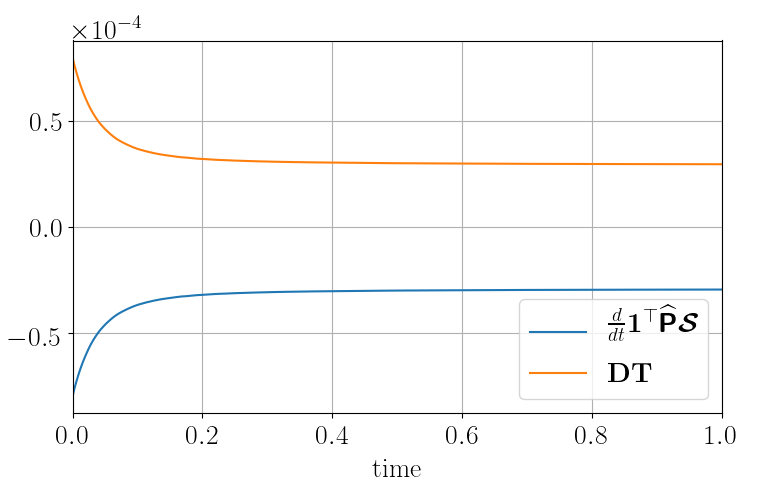} & \includegraphics[width = 0.45\textwidth]{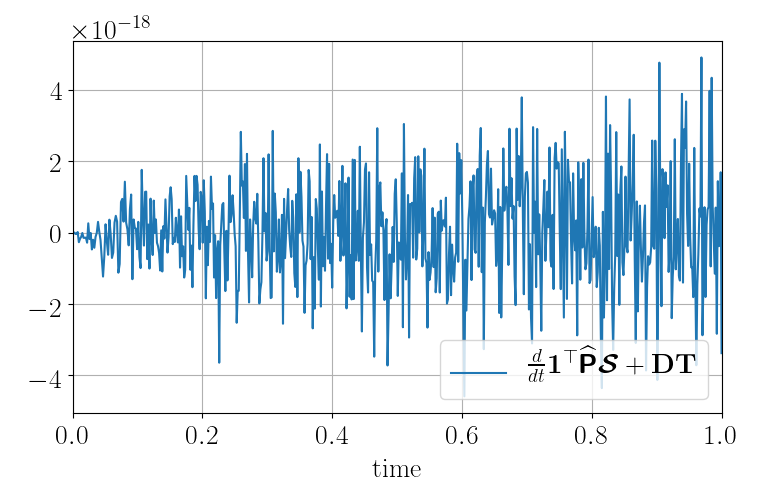}\\
  \end{tabular}
	\caption{\reftwo{Lid-driven cavity. Left: discrete integral of the time rate of change of the entropy function, $\frac{d}{dt} \mathbf{1}^{\top} \widehat{\Pmatvol} \, \bm{\fnc{S}}$\dadd{,} and discrete dissipation term\dadd{,} $\mathbf{DT}$ (see equation \eqref{eq:estimate-no-slip-bc-2}). Right: instantaneous entropy balance (see equation \eqref{eq:estimate-no-slip-bc-2}).}}
  \label{fig:ldc_dSdt_error}
\end{figure}
\dadd{Next, we validate the algorithm on the simple problem of the three-dimensional
lid-driven cavity with adiabatic solid walls. The domain is} a cube \dadd{with sides of length} $l$ discretized using a
Cartesian grid composed of eight elements in each direction.
A velocity field is imposed on one of the walls,
corresponding to a rigid body rotation about the
center of the wall at a speed $\omega$ (see figure \ref{fig:dc_bc_noheat})\dadd{.} \dadd{Based} on the rotation velocity and the length of the cavity,
this example is characterized by
a Reynolds number $Re=l^2\omega/\nu=100$ and a Mach number $M=l \omega/c=0.05$. All the dissipation terms used for the interface coupling \cite{parsani_entropy_stable_interfaces_2015} and the imposition of the boundary
conditions are
turned off, including upwind and interior-penalty SAT terms. \reftwo{The two terms \dadd{on} the left-hand side of equation \eqref{eq:estimate-no-slip-bc-2}, $\frac{d}{dt} \mathbf{1}^{\top} \widehat{\Pmatvol} \, \bm{\fnc{S}} + \mathbf{DT}$, are
monitored at every time step. Note that because we do not include any dissipation terms for the interface couplings and adiabatic wall boundary conditions are used, $\mathbf{\Xi}$ in equation \eqref{eq:estimate-no-slip-bc-2} is zero.}
The monitored values are reported in Figure \ref{fig:ldc_dSdt_error}, left panel,
together with the error committed in entropy conservation, right panel, which is below machine (double) precision.
Therefore, we have confirmed numerically that the newly developed wall boundary 
conditions together with the interior discretization operator are entropy conservative 
when all the entropy dissipative terms are turned off.

We also present the results of a modified version of the lid-driven cavity example by considering \dadd{non-adiabatic} boundaries, and by imposing a nonzero entropy flux in a face adjacent to the rotating face\dadd{, given} as $\mathtt{g}(t)=10^{-4}\sin(4\pi t)$ (see figure \ref{fig:dc_bcs}). Figure \ref{fig:ldc_dSdt_g_t_error} shows the terms on the entropy balance for this example, including the boundary contribution (left panel), and the error in the entropy conservation (right), which is below machine \reftwo{(double)} precision.

\begin{figure}
  \centering
  \includegraphics[width=0.6\textwidth]{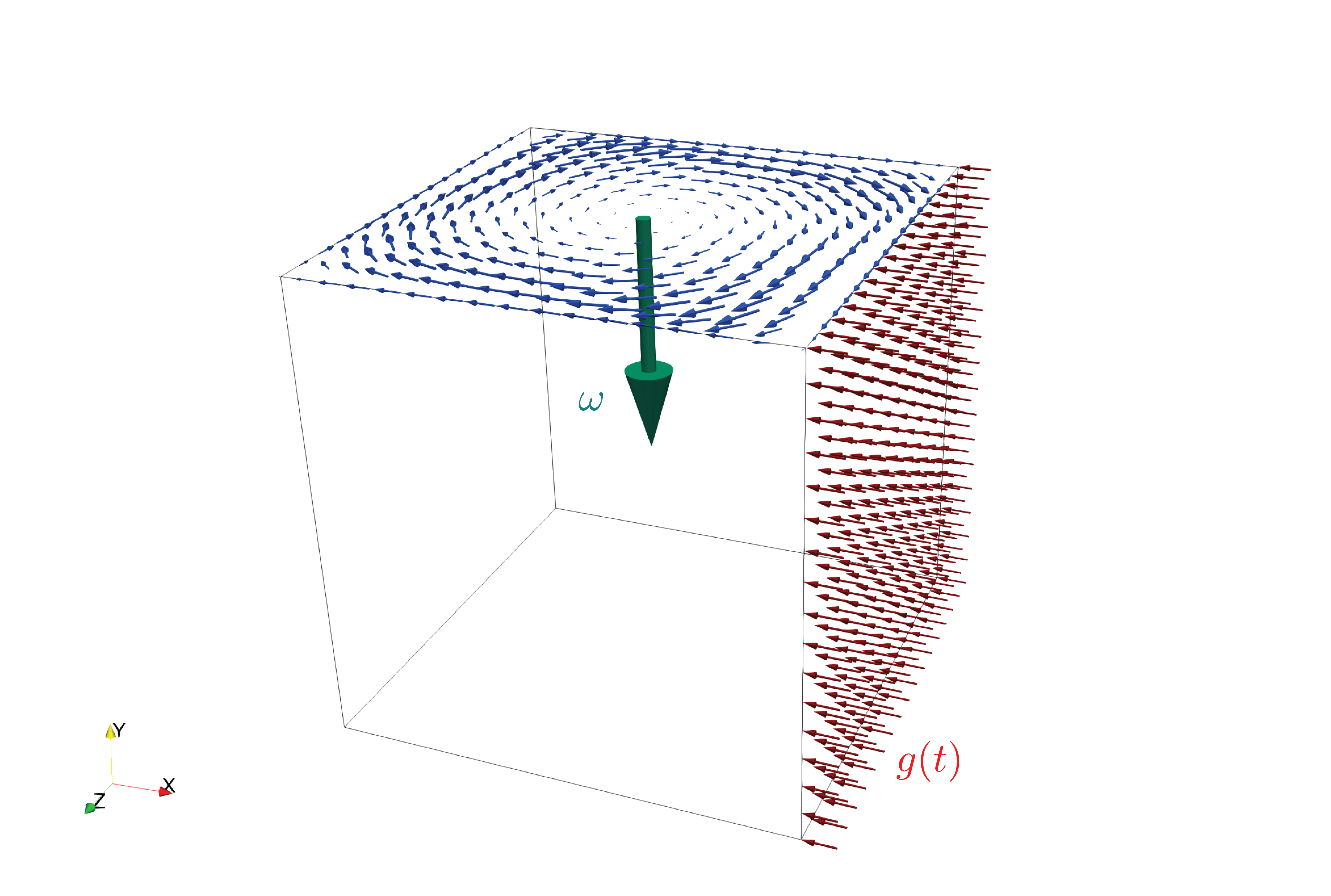}
  \caption{Driven cavity with rigid body rotation $\omega$ in one of its faces and a time-dependent entropy flux $\mathtt{g}(t)$ on and adjacent face.}
  \label{fig:dc_bcs}
\end{figure}

\begin{figure}
  \centering
  \begin{tabular}{c c}
  \includegraphics[width = 0.45\textwidth]{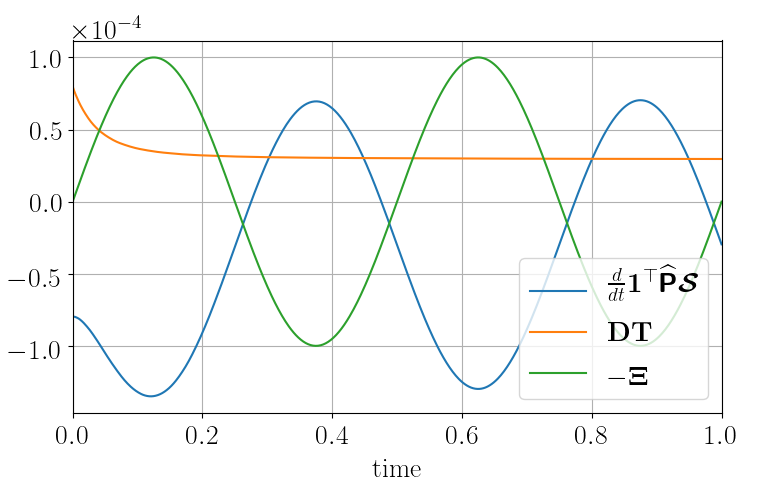} & \includegraphics[width = 0.45\textwidth]{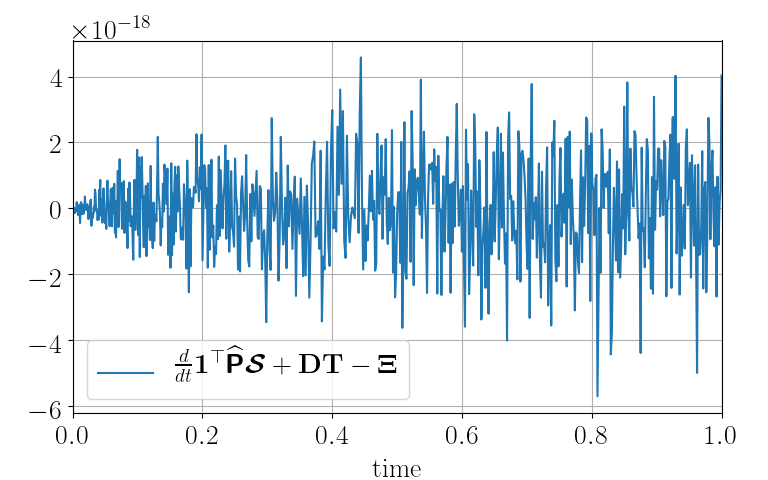}\\
  \end{tabular}
	\caption{\reftwo{Lid-driven cavity with nonzero entropy flux. Left: discrete integral of the time rate of change of the entropy function, $\frac{d}{dt} \mathbf{1}^{\top} \widehat{\Pmatvol} \, \bm{\fnc{S}}$, discrete dissipation term\dadd{,} $\mathbf{DT}$\dadd{,} and discrete integral of the boundary data contribution, $\mathbf{\Xi} = \Pxgentwodhat{2}{3} \, \mathtt{g}(t)$ (see equation \eqref{eq:estimate-no-slip-bc-2}). Right: instantaneous entropy balance (see equation \eqref{eq:estimate-no-slip-bc-2}).}}
  \label{fig:ldc_dSdt_g_t_error}
\end{figure}

\subsection{Subsonic flow past a cylinder}

We further explore the engineering capabilities of the entropy\dadd{-}stable SBP-SAT operators and the \dadd{numerical procedure for the weak imposition of the} solid wall boundary
conditions by simulating the flow around a cylinder, a canonical example of external flows with
important applications such as particle transport, fluid-structure interaction and bluff body
aerodynamics, that has been extensively studied both numerically
\cite{karniadakis_triantafyllou_1992,barkley_henderson_1996,henderson_1997,Park1998}
 and experimentally \cite{williamson_1989,norberg_1994,zhang_1995,williamson_1996,prasad_williamson_1997}.

We described the flow in a Cartesian coordinate system ($x_1$,$x_2$,$x_3$), with
the free-stream velocity aligned in the $x_1$ direction. A circle of diameter
$d$ is centered at the origin, with the domain of interest delimited by
a rectangular box that respectively extends $20d$ and $60d$ upstream and downstream\dadd{ of} the 
flow direction, and $30d$ in the $x_2$ direction.
Such a 2D domain is then extruded a distance $d$ in the $x_3$ direction. We prescribe \dadd{an} 
adiabatic no-slip wall boundary condition on the surface of the cylinder; periodic boundaries are \dadd{applied} in the $x_3$ direction.
The remaining faces of the box are treated as a far field.

The domain \reftwo{is} discretized as follows: we first mesh the $(x_1,x_2)$ plane with second-order quadrilateral elements, and include a boundary layer around the cylinder wall
 as pictured in Figure \ref{fig:cylindermesh}. We then extrude this mesh using \reftwo{three} layers of elements in the $x_3$ direction over the span of the cylinder.

\begin{figure}
  \centering
  \includegraphics[width=0.6\textwidth]{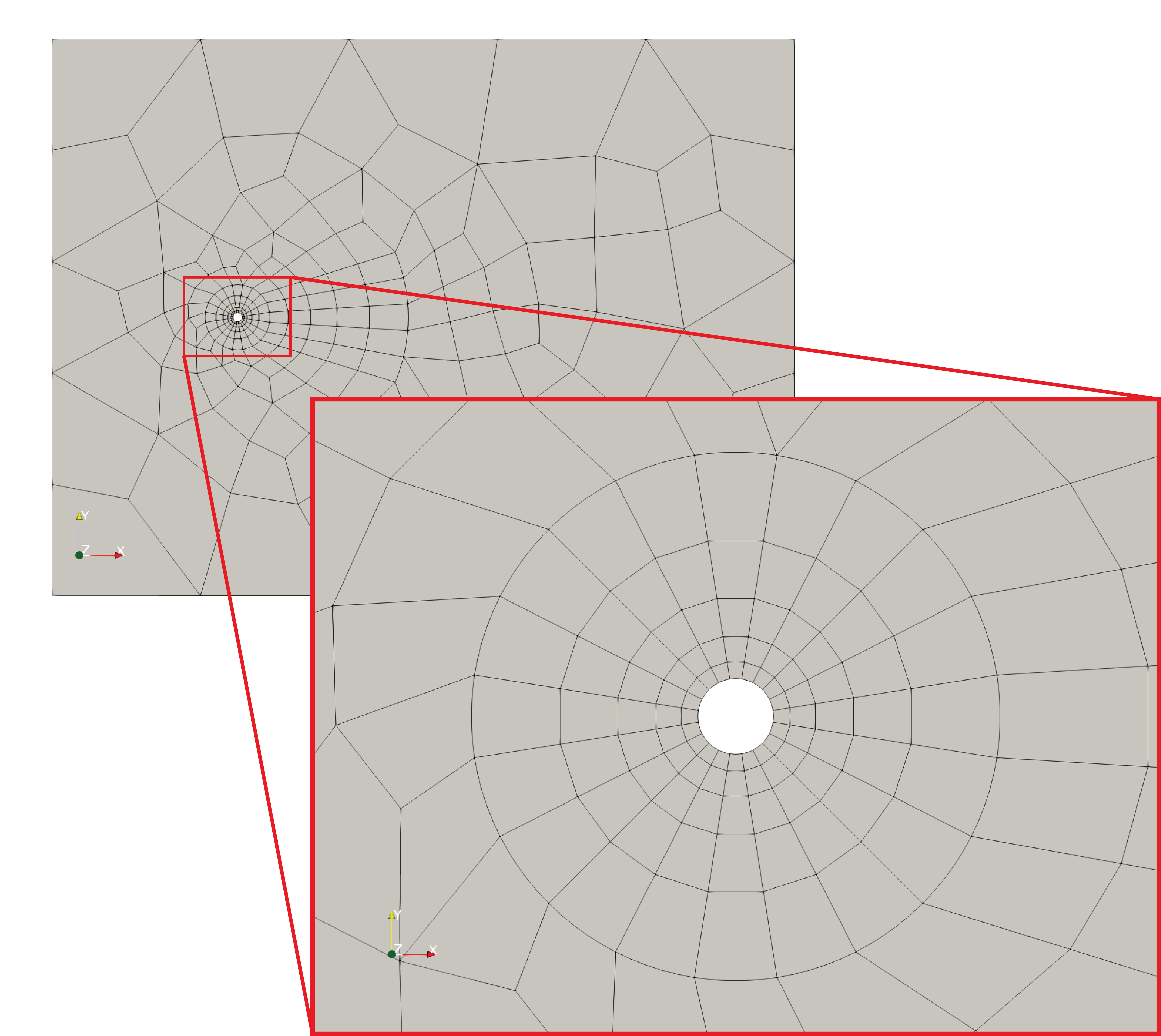}
  \caption{Subsonic flow past a cylinder: mesh cut on the $x_3$ plane, with a zoom of the boundary layer area.}
  \label{fig:cylindermesh}
\end{figure}

Considering the free-stream velocity $u_\infty$
and the diameter of the cylinder $d$, the free-stream flow is characterized by a Mach number $M_\infty=0.1$ and a
Reynolds number $Re_\infty=300$. Under these conditions, the flow developed
behind the cylinder is three-dimensional and \dadd{results in} vortex shedding at a constant
frequency.

In Table \ref{tab:cylindergrid_all} we report the time average drag coefficient\dadd{,}
$\bar{c}_d$\dadd{,} and the Strouhal number\dadd{,} $St=fd/u_\infty$ based on the
frequency of the vortex shedding, \dadd{resulting from} uniform $p$-refinements (with polynomial orders for the solution space ranging from $p=2$ to $p=5$)
and three levels of uniform $h$-refinement of the initial mesh described below, resulting in 714 (denoted by $rl=0$ in Table \ref{tab:cylindergrid_all}), 5,712 ($rl=1$), and 45,969 ($rl=2$) hexahedral elements respectively.
Comparisons with results reported in the literature for these aerodynamic coefficients \cite{Henderson95} are also provided.
From these tables, it can be seen that in all cases the accuracy of the results improve by increasing the order of accuracy of the scheme and the grid resolution. 
\dadd{The} fifth- ($p=4$) and sixth-order ($p=5$)
accurate entropy\dadd{-}stable schemes perform very well on the last level of $h-$refinement considered ($rl=2$), which is coarser compared to the typical grids used with second-order finite volume and finite differences schemes.

\begin{table}[]
  \centering
  \begin{tabular}{||c||c|c|c||}
    \hline \hline
     Poly. order & $rl=0$ & $rl=1$ & $rl=2$\\
    \hline
    \begin{tabular}{c}
                \\
      $p=2$     \\
      $p=3$     \\
      $p=4$     \\
      $p=5$     \\
      \cite{Henderson95}
    \end{tabular} &
    \begin{tabular}{cc}
      $\bar{c}_d$ & $St$ \\
      1.579     & 0.1995 \\
      1.439     & 0.1997 \\
      1.458     & 0.1998 \\
      1.414     & 0.1999 \\
      1.381     & 0.2000
    \end{tabular} &
    \begin{tabular}{cc}
      $\bar{c}_d$ & $St$ \\
      1.541     & 0.1998 \\
      1.439     & 0.1999 \\
      1.386     & 0.1999 \\
      1.382     & 0.1999 \\
      1.381     & 0.2000
    \end{tabular} &
    \begin{tabular}{cc}
      $\bar{c}_d$ & $St$ \\
      1.443     & 0.1999 \\
      1.383     & 0.1999 \\
      1.384     & 0.1999 \\
      1.386     & 0.1999 \\
      1.381     & 0.2000
    \end{tabular}\\
    \hline
  \end{tabular}
  \caption{Subsonic flow past a cylinder: mean drag coefficient and Strouhal number for different polynomial orders $p$ and uniform refinements $rl$, $Re_{\infty}=300$, $M_{\infty}=0.1$. Reference values are also provided.}
  \label{tab:cylindergrid_all}
\end{table}

\subsection{Subsonic flow past a sphere}

We then test our implementation within a more complicated setting represented by the flow around a sphere.
In this case, a sphere of diameter $d$ is centered
at the origin, and a box is respectively extended $20d$ and
$60d$ upstream and downstream \dadd{of the flow direction}; the box size is $30d$ in both the $x_2$ and $x_3$ directions.
As boundary conditions, we consider adiabatic solid walls at the surface of the sphere and far field on all faces of
the box.

The surface of the sphere is first triangulated
using second-order simplices and a boundary layer composed of triangular prisms is
extruded from the sphere surface for a total length of $3d$. The rest of the domain is meshed with an
unstructured tetrahedral mesh. We then obtain an unstructured conforming hexahedral mesh by uniformly splitting each tetrahedron \dadd{in to} \reftwo{four} hexahedral elements,
and each prism \dadd{in to} \reftwo{three} hexahedral elements, resulting in a total of 4,328 hexahedral elements. A cut of the final mesh is illustrated in Figure \ref{fig:sphere_mesh},
together with a representative splitting of a tetrahedral and a prismatic cell.

\begin{figure}[t!]
  \begin{subfigure}[t]{0.6\textwidth}
    \centering
    \includegraphics[width=\textwidth]{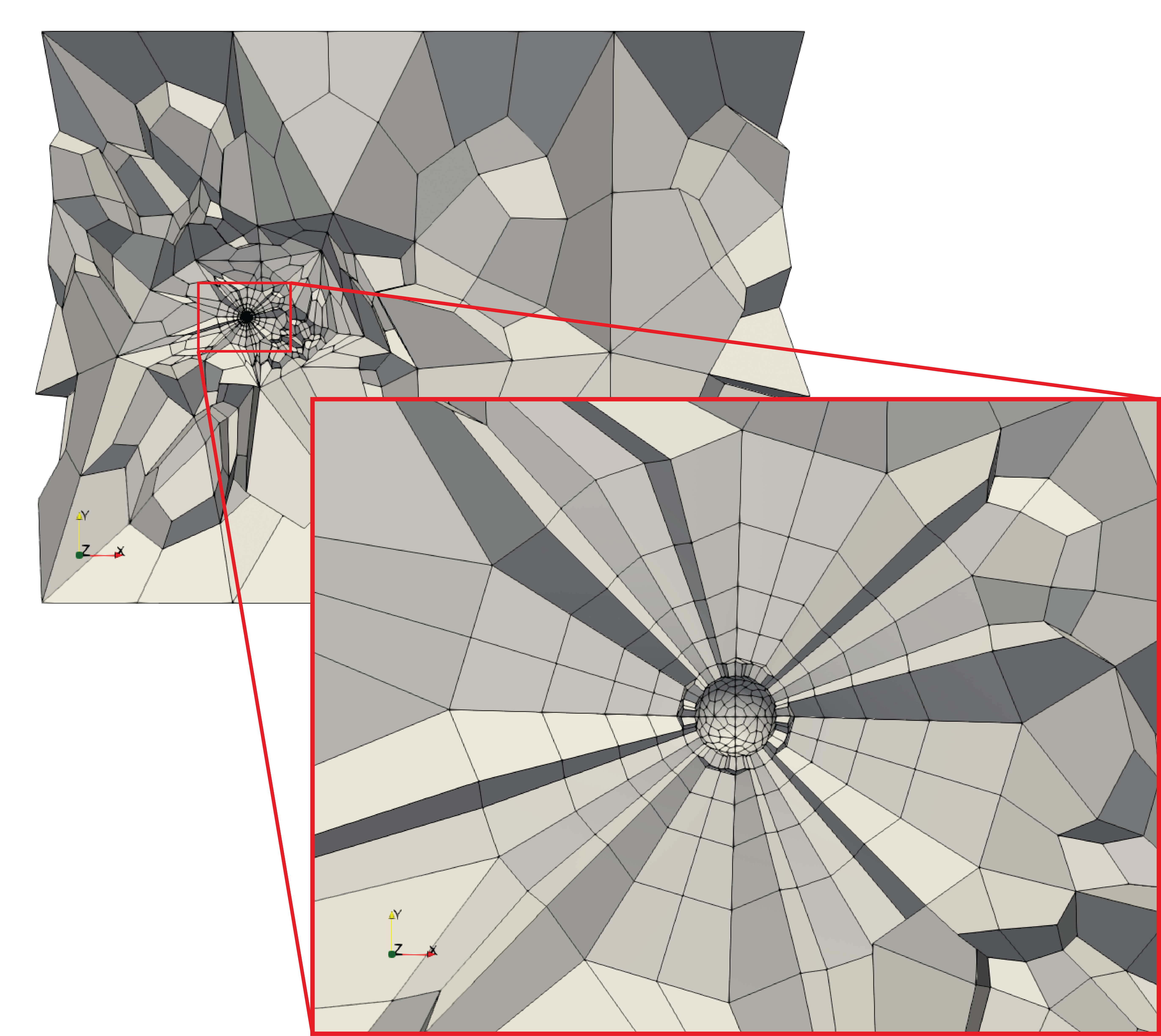}
  \end{subfigure}
  \begin{subfigure}[t]{0.4\textwidth}
    \centering
    \includegraphics[width=\textwidth,height=1.3\textwidth]{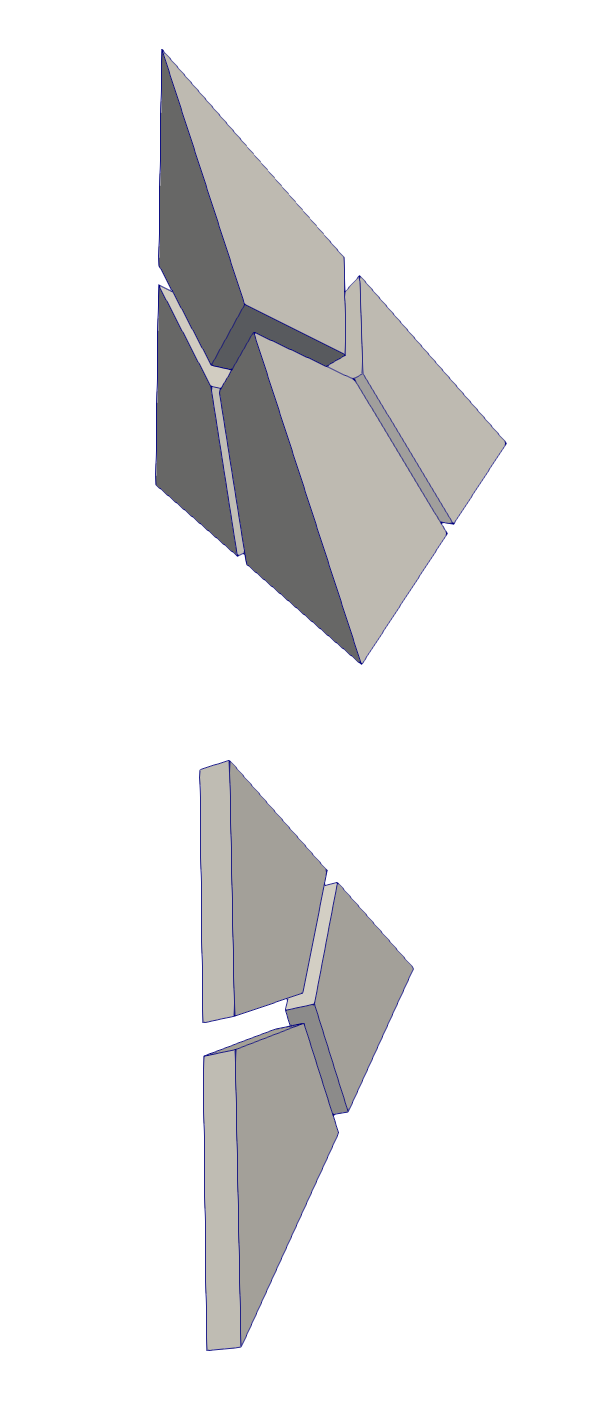}
  \end{subfigure}
  \caption{Subsonic flow past a sphere. Left: mesh cut on the $x_3$ plane, with a zoom of the boundary layer area. Right panel, top: tetrahedral to hexahedral refinement. Right panel, bottom: prismatic to hexahedral refinement.}
  \label{fig:sphere_mesh}
\end{figure}

The free-stream flow is characterized by a Mach number $M_\infty=0.1$ and a
Reynolds number $Re_\infty=300$. Under these conditions, the flow developed behind the
sphere is non-axisymmetric, with hairpin vortices shedding from the wake at a constant rate \cite{MITTAL20084825}, inducing
a total non-zero lift force on the sphere. Figure \ref{fig:sphere_lic} \dadd{depicts} these hairpin vortices with isocontours of the Q-criterion colored by the vorticity magnitude at a given time instant.
\begin{figure}
    \centering
    \includegraphics[width=0.6\textwidth]{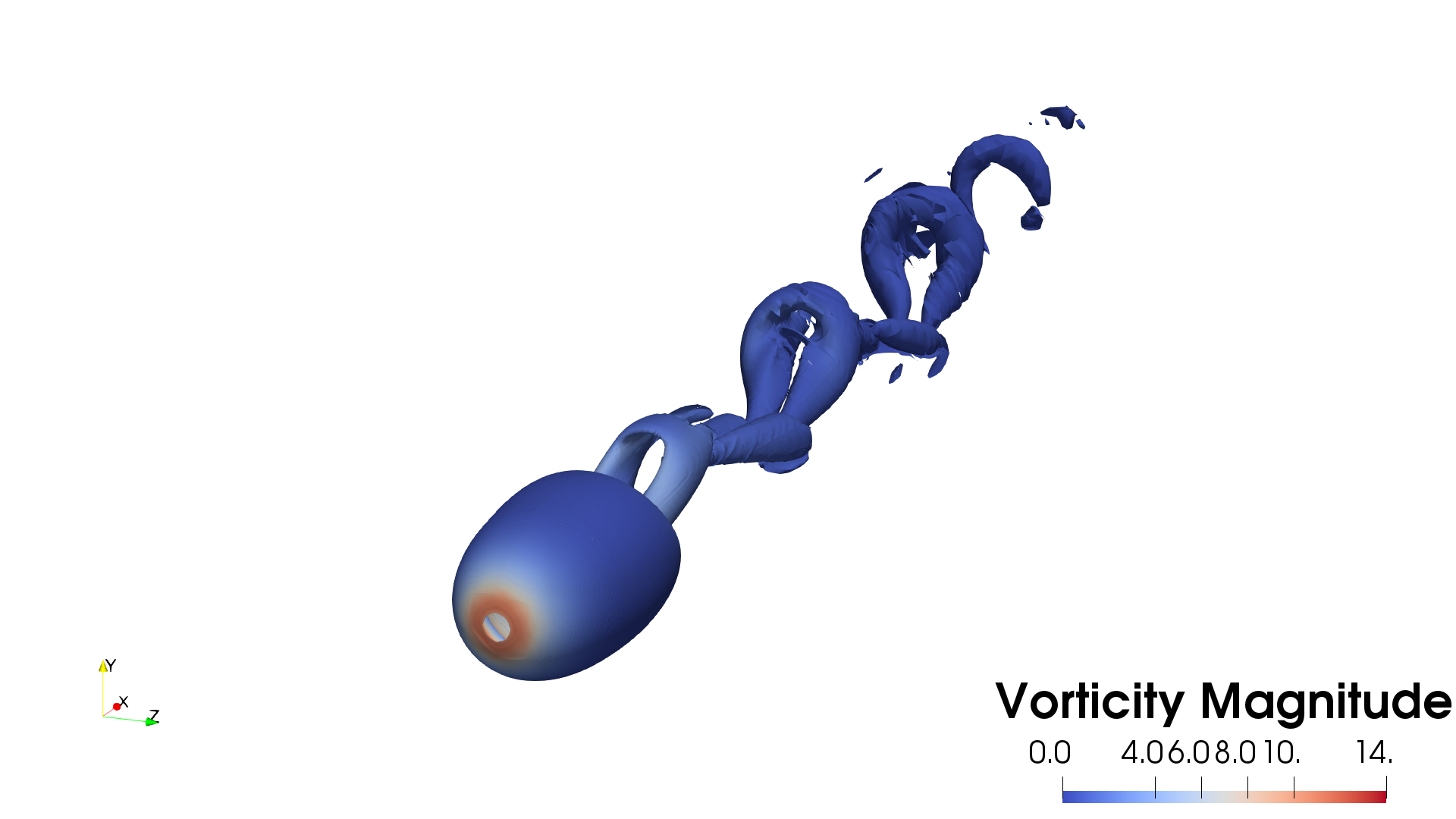}
    \caption{Subsonic flow past a sphere: isocontour 0.02 of the Q-criterion colored by vorticity magnitude at $t=150$, $Re_{\infty}=300$, $M_{\infty}=0.1$. Solution with $rl=2$ and $p=5$. Notice the hairpin vortices with a nearly constant orientation.}
  \label{fig:sphere_lic}
\end{figure}

As in the previous test case, we analyze the convergence of some quantities
of interest under $p$- and $h$-refinement. The number of elements in the sequence of nested grids considered are 4,328, 34,624 and 276,992 respectively; the polynomial order of the solution ranges from $p=2$ to $p=5$.
For this test, we monitor the time average drag coefficient of the sphere\dadd{,}
$\bar{c}_d$, the average lift coefficient\dadd{,} $\bar{c}_l$ (with its orientation $\theta$ in the $x_3-x_2$ plane),
and the Strouhal number $St$ based on the frequency of the vortex shedding.
We remark that $\bar{c}_l$ is nonzero because of the asymmetric flow features \cite{MITTAL20084825}.
The results provided in Table \ref{tab:spheregrid_all} are in good agreement with those reported in the literature \cite{johnson_patel_1999} for sufficiently refined meshes (i.e. $rl=1$ and $rl=2$),
and polynomial orders greater \dadd{than} or equal to 3.


Figure \ref{fig:sphere_lic} shows the 0.02 isocontour of the Q-criterion at $t=150$ for the solution of the flow around a sphere with
$rl=2$, and $p=5$. At this time\dadd{,} the flow has reached a constant periodic state. From this figure, it can
be seen the asymmetric pattern of the flow, consistent with a nonzero $\bar{c}_l$. Notice the heads of the hairpin vortices are
almost aligned with the $x_2$ axis, in agreement with our reported value of $\theta$
for this solution.

\begin{table}[]
  \centering
  \begin{footnotesize}
  \begin{tabular}{||c||c|c|c||}
    \hline \hline
     Poly. order& $rl=0$ & $rl=1$ & $rl=2$\\
    \hline
    \begin{tabular}{c}
                \\
      $p=2$     \\
      $p=3$     \\
      $p=4$     \\
      $p=5$     \\
      \cite{johnson_patel_1999}
    \end{tabular} &
    \begin{tabular}{cccc}
      $\bar{c}_d$ & $\bar{c}_l$ & $\theta$ & $St$ \\
      1.125 & 0.095 & -0.34   & 0.060 \\
      0.815 & 0.074 & -0.18   & 0.120 \\
      0.688 & 0.069 & 0.14    & 0.140 \\
      0.650 & 0.066 & 0.39    & 0.140 \\
      0.656 & 0.069 & -       & 0.137
    \end{tabular} &
    \begin{tabular}{cccc}
      $\bar{c}_d$ & $\bar{c}_l$ & $\theta$ & $St$ \\
      0.843 & 0.076 & -0.10    & 0.140 \\
      0.671 & 0.068 &  0.24    & 0.140 \\
      0.656 & 0.067 &  1.80    & 0.140 \\
      0.659 & 0.067 &  1.68    & 0.140 \\
      0.656 & 0.069 & -       & 0.137
    \end{tabular} &
    \begin{tabular}{cccc}
      $\bar{c}_d$ & $\bar{c}_l$ & $\theta$ & $St$ \\
      0.691 & 0.069 &  0.45    & 0.140 \\
      0.659 & 0.068 &  1.71    & 0.140 \\
      0.659 & 0.068 &  1.70    & 0.140 \\
      0.659 & 0.068 &  1.70    & 0.140 \\
      0.656 & 0.069 & -       & 0.137
    \end{tabular}\\
    \hline
  \end{tabular}
  \caption{Subsonic flow past a sphere: mean drag coefficient $\bar{c}_d$, mean lift coefficient $\bar{c}_l$ and its orientation in the $x_1=0$ plane $\theta$, and Strouhal number $St$ for different polynomial orders $p$ and uniform refinements $l$, $Re_{\infty}=300$, $M_{\infty}=0.1$. Reference values are also provided.}
  \label{tab:spheregrid_all}
  \end{footnotesize}
\end{table}

\subsection{Supersonic flow past a square cylinder}

We finally provide further evidence of the robustness of the algorithm  in the context of supersonic flow around a square cylinder with $Re_\infty=10^4$ and
$M_\infty=1.5$, which features shocks, expansion regions and three-dimensional vortical structures \cite{parsani_entropy_stability_solid_wall_2015}.
We start with a square of side $s$ placed in the $x_1-x_2$
plane, and construct an unstructured mesh around it, manually refined
in order to capture the main features of the flow (see Figure
\ref{fig:supersonicmesh}). We then extrude the mesh for a total size $s$ in the $x_3$ direction using four elements; the final three-dimensional mesh used in the study consists of  87,872 hexahedral elements.
The boundary conditions imposed are adiabatic solid wall on the square cylinder surfaces, periodic boundary
conditions in the $x_3$ direction\dadd{,} and far field \dadd{at} the remaining boundaries. The problem is solved using a fourth-order accurate ($p=3$) discretization.

\begin{figure}
  \centering
  \includegraphics[width=0.6\textwidth]{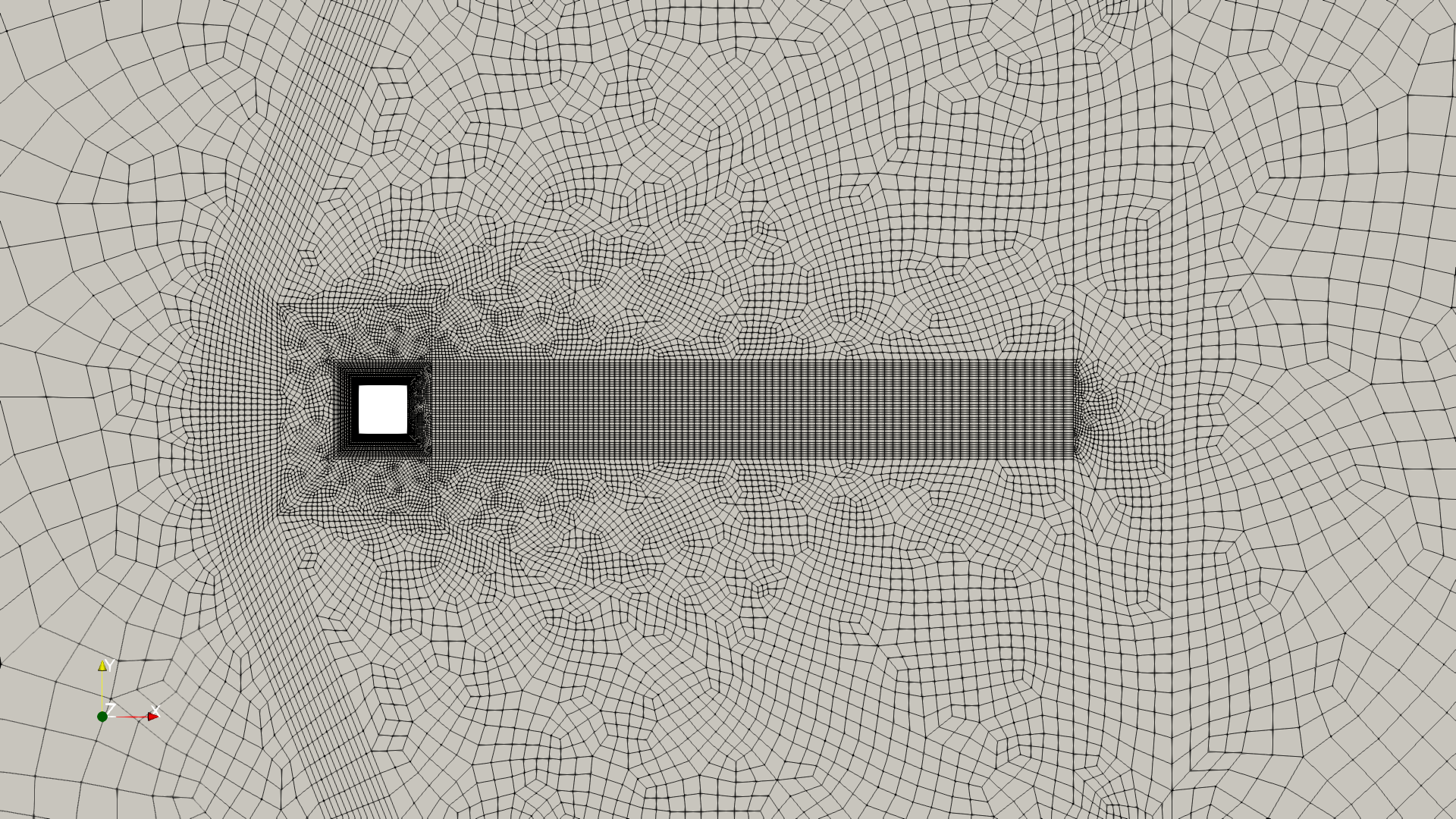}
  \caption{Mesh used for the solution of the supersonic flow past a square cylinder.}
  \label{fig:supersonicmesh}
\end{figure}

\begin{figure}
  \centering
  \begin{tabular}{c c}
    \includegraphics[width = 0.45\textwidth]{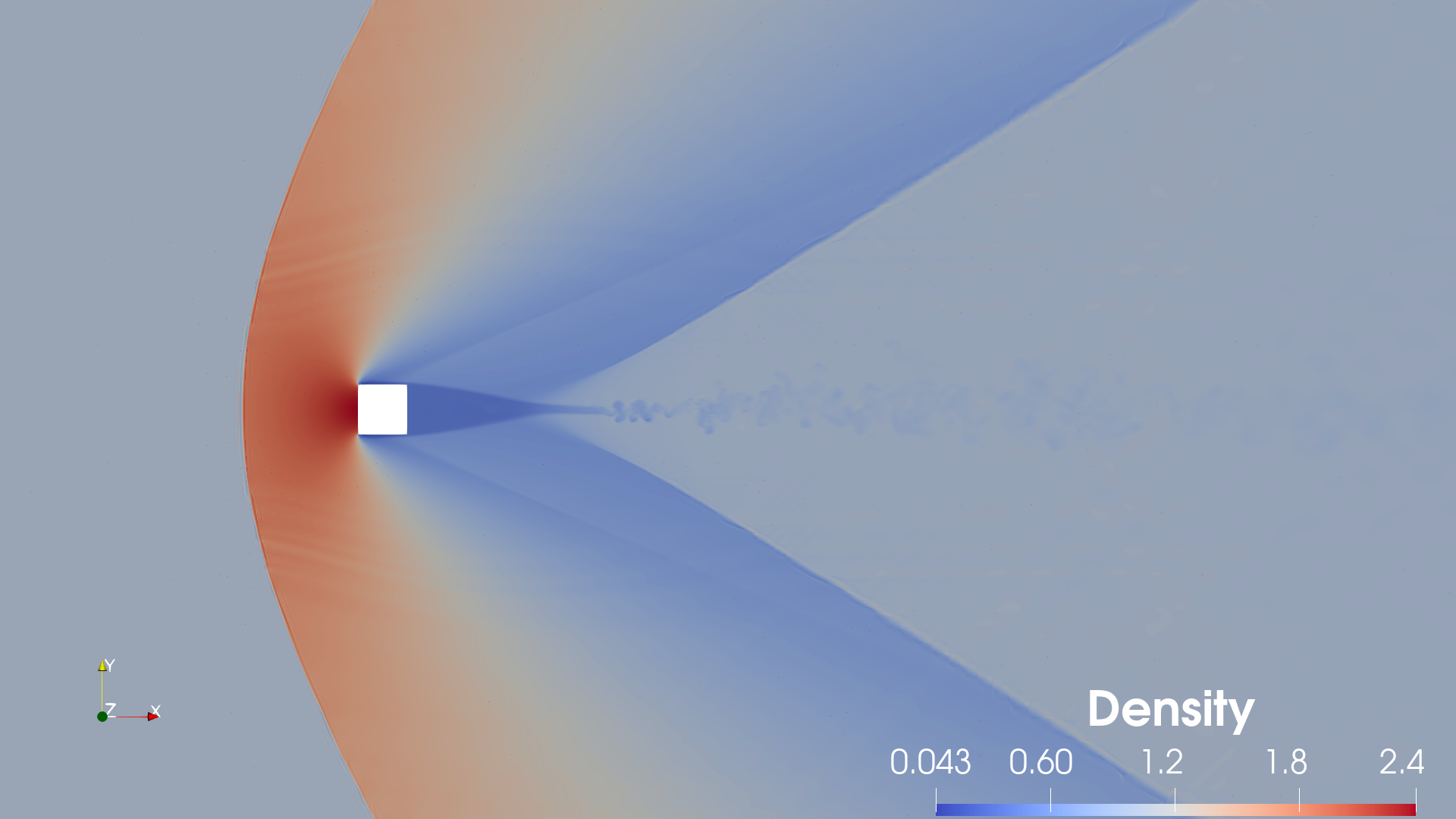} & \includegraphics[width = 0.45\textwidth]{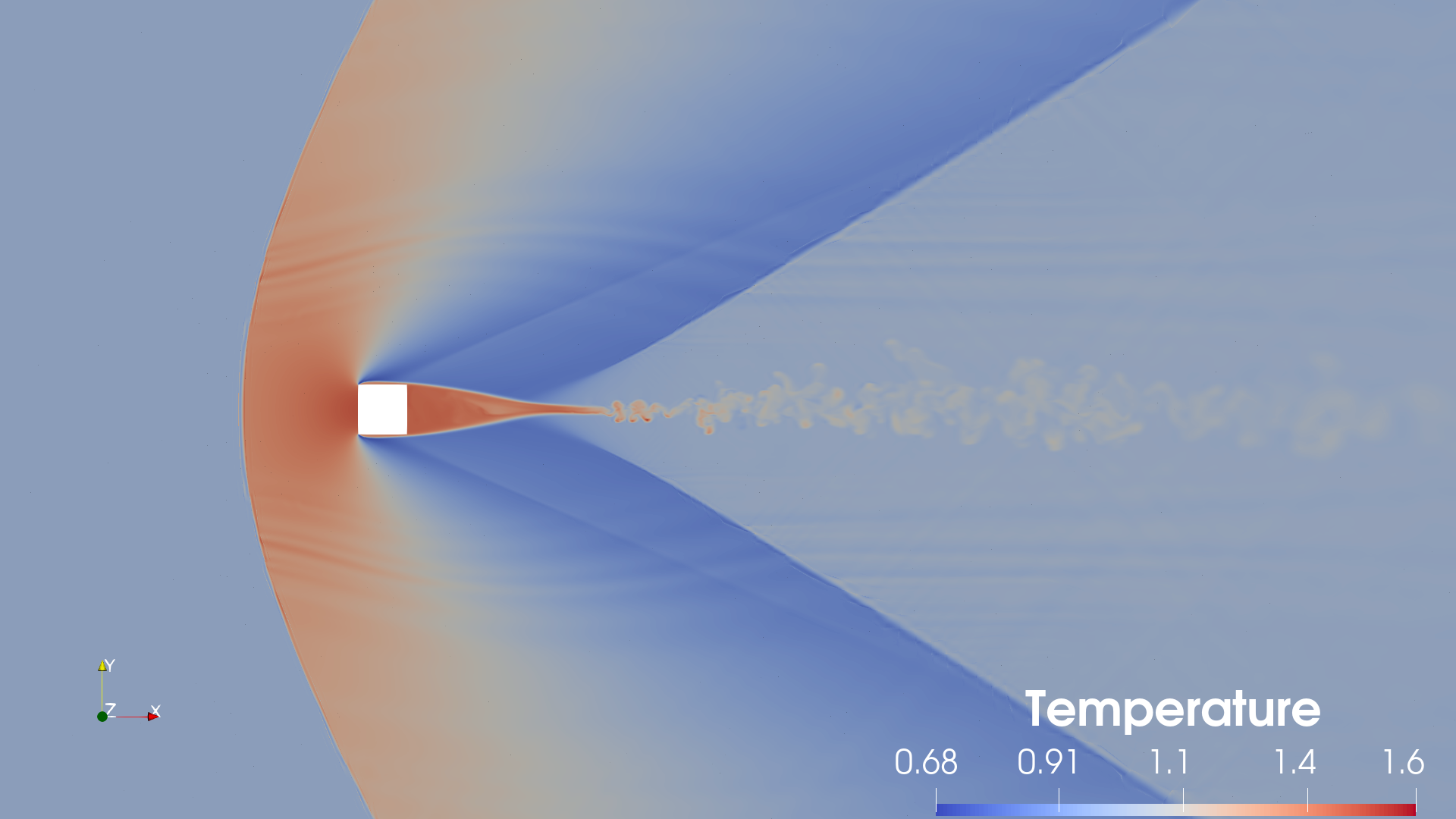}\\
    \includegraphics[width = 0.45\textwidth]{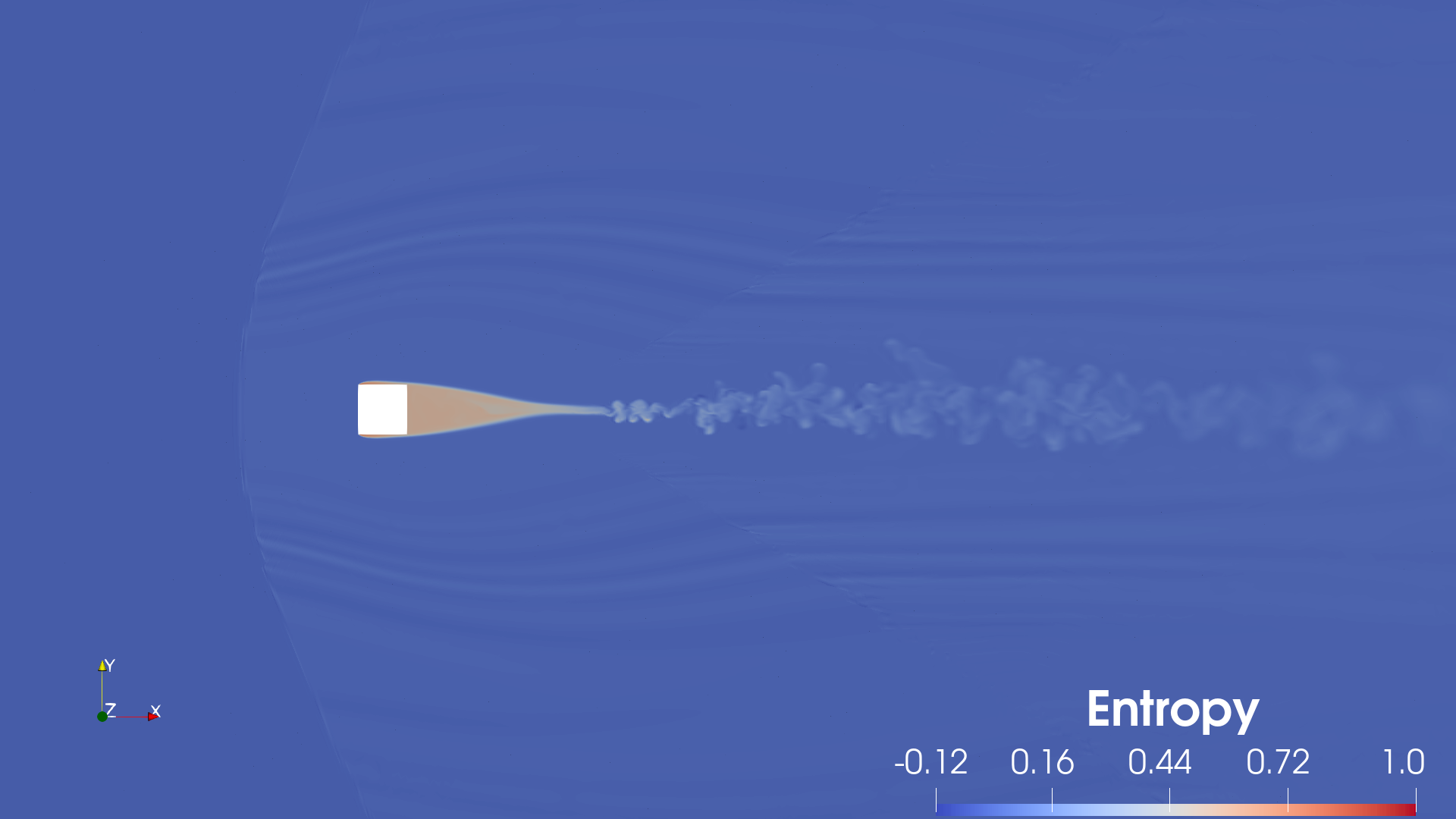} & \includegraphics[width = 0.45\textwidth]{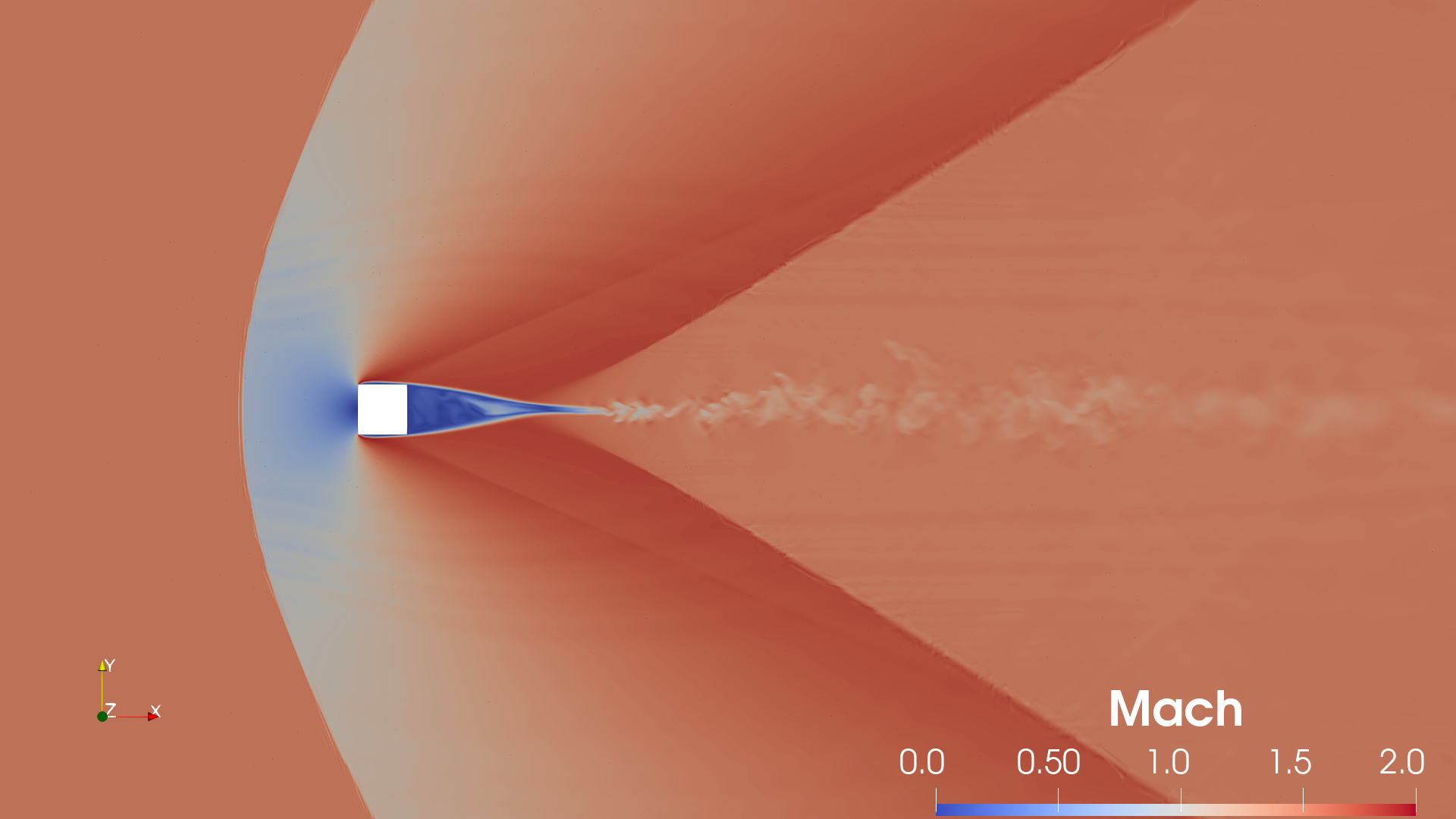}\\
  \end{tabular}
  \caption{Density, temperature, entropy and local Mach number contours for the supersonic flow around a square cylinder with $Re_\infty=10^4$ and $M_\infty=1.5$ at $t=100$.}
  \label{fig:ssresults}
\end{figure}





Figure \ref{fig:ssresults} show the results for the
supersonic square cylinder at $t=100$. At this point in time, the flow is fully unsteady
and the shock in front of the cylinder has reached its final position. The flow is characterized by the shock in front of the square cylinder and
those in the near wake region. There is also an unsteady wake populated by three-dimensional vortices
shedding from the body.

We finally remark that the small oscillations near the shock
region are caused by discontinuities in the solution and are expected for this scheme.
In fact, we are not using any shock capturing method or reducing the order of
scheme at the discontinuity. Nevertheless, the simulation remains stable at all
time, and the oscillations are always confined \dadd{to} small regions \dadd{near} the
discontinuities. \reftwo{This is a feat unattainable with several alternative approaches 
to wall boundary conditions based on linear analysis which for this test problem lead to
numerical instabilities and an almost immediate crash of the solver.}

\section{Conclusions}\label{sec:conclusions}

We have used entropy stability and \dadd{the} summation-by-parts framework to derive entropy stable wall boundary conditions
for the three-dimensional compressible Navier--Stokes equations in the presence of an adiabatic wall, or a wall with a prescribed heat entropy flow.

A point-wise entropy\dadd{-}stable numerical procedure has been presented for weakly enforcing
these solid wall boundary conditions at the semi-discrete level combining a penalty
flux approach with a simultaneous-approximation-term technique for the
conservative variables and the variables representing the gradient of entropy.
The resulting semi-discrete operator mimics exactly the behavior at the continuous level, and the proposed non-linear boundary treatment provides a mechanism for ensuring non-linear stability in
the $L^2$ norm of the continuous and semi-discretized compressible Navier--Stokes equations.

\dadd{The design order properties of the scheme are validated in the context of 
laminar flow in a pipe with an annular section.} Detailed viscous numerical computations in a three-dimensional subsonic
lid-driven cavity flow have been presented
to assess the accuracy of the proposed numerical techniques. The
error in the entropy function balance showed an excellent agreement with the
theory with or without heat entropy flux.

Unsteady laminar flow past a cylinder and a sphere have been presented to highlight
the efficacy in computing aerodynamic forces; numerical simulations considering both $p$- and $h$-refinements
showed very good agreement with results available from the literature.

The robustness of the complete semi-discrete operator
(i.e. the entropy\dadd{-}stable interior operator coupled with the new boundary
treatment) \dadd{was} demonstrated for the supersonic flow past a
three-dimensional square
cylinder at $Re_{\infty}=10\time10^4$ and $M_{\infty} = 1.5$, as proposed in
\cite{parsani_entropy_stability_solid_wall_2015}. This test has
been successfully computed with a fourth-order accurate method without the
need \dadd{of introducing} artificial dissipation, limiting techniques, or filtering,
for the purpose of stabilizing the computations, a feat unattainable with several alternative
approaches based on linear analysis only.

Although the robustness and efficacy of the techniques presented in this work have been validated
using discontinuous spectral collocation operators on unstructured grids,
the new boundary conditions can be applied to a very broad class of
spatial discretizations and they are compatible with any diagonal-norm summation-by-parts spatial operator,
including finite element, finite difference, finite volume, discontinuous Galerkin, and
flux reconstruction schemes.

\section*{Acknowledgments}
The research reported in this paper was funded by King Abdullah University of
Science and Technology. We are thankful for the computing resources of the
Supercomputing Laboratory and the Extreme Computing Research Center at
King Abdullah University of Science and Technology.

\section*{References}
\bibliographystyle{aiaa}
\bibliography{bc}

\begin{appendix}

\section{Python script for the verification of the proofs in three dimension using 
  curvilinear grids}\label{app:python_proofs}

The following python script can be used to verify all the theorems and corresponding
proofs used to construct the entropy\dadd{-}conservative and entropy\dadd{-}stable solid wall
boundary conditions proposed herein. The script takes into account the general case
of curvilinear grids.

\lstset{
  language=Python,
  basicstyle=\scriptsize\ttfamily,
  keywordstyle=\color{blue},
  commentstyle=\color{black!60!green},
  morecomment=[l]{!\ },
  numbers=left,
  numbersep=5pt,
  stepnumber=1,
  numberstyle=\tiny\color[rgb]{0.5,0.5,0.5},
  emphstyle=\color{black!30!red},
}
\lstinputlisting{proofs.py}

\section{FORTRAN code for the implementation of the boundary conditions on 
  curvilinear grids}\label{app:fortran_code}

In this appendix we provide a simple but yet general FORTRAN implementation of the 
proposed entropy stable wall boundary conditions on curvilinear grids. The following
piece of code receives as input the primitive variables, $V$, and outputs the 
primitive variables of the ghost state. It is supposed to be called for each 
collocated point lying on the wall boundary face.

\lstset{
  language=[95]Fortran,
  basicstyle=\scriptsize\ttfamily,
  keywordstyle=\color{blue},
  commentstyle=\color{black!60!green},
  morecomment=[l]{!\ },
  numbers=left,
  numbersep=5pt,
  stepnumber=1,
  numberstyle=\tiny\color[rgb]{0.5,0.5,0.5},
  emph={WallBCInviscid,WallBCAdiabatic},
  emphstyle=\color{black!30!red},
}
\lstinputlisting{WallBC.f90}

\end{appendix}

\end{document}